\documentclass[a4paper,12pt,reqno]{amsart}
\usepackage{amsmath,amsfonts,amssymb,amsthm,enumerate,multicol}
\usepackage{tikz,graphicx}
\usepackage{hyperref}
\usepackage{caption,enumitem}
\usepackage{cleveref}
\usepackage{float}
\usepackage[labelsep=space]{caption}
\usepackage[font=footnotesize]{caption}
\usepackage{xcolor}
\usepackage{cite}
\usepackage{mathtools}
\usepackage{autobreak}

\definecolor{eqblue}{RGB}{0,0,180} \definecolor{citered}{RGB}{180,0,0} \hypersetup{ colorlinks=true, linkcolor=eqblue, citecolor=citered,  urlcolor=blue }
\theoremstyle{plain}
\newtheorem{theorem}{Theorem}[section]
\newtheorem{prop}[theorem]{Proposition}
\newtheorem{corollary}[theorem]{Corollary}
\newtheorem{lemma}[theorem]{Lemma}
\theoremstyle{definition}
\newtheorem{definition}[theorem]{Definition}
\theoremstyle{remark}
\newtheorem{remark}[theorem]{Remark}

\allowdisplaybreaks
\numberwithin{equation}{section}

\begin{document}
\thispagestyle{empty}

\begin{center}
\Large{\textbf{On a Class of Continuous Collision-Induced Breakage Equation}}
\end{center}
\medskip
\centerline{${\text{Mashkoor~Ali}}$}
\let\thefootnote\relax
\footnotetext{Email address: mashkoor.ali@jgu.edu.in}
\medskip
{\footnotesize
  \centerline{Jindal Global Business School, O.P. Jindal Global University,}
  \centerline{Sonipat-131001, Haryana, India}
}
\bigskip

\begin{quote}
{\small{\em\bf Abstract.}
In this work, we establish the existence of mass-conserving weak solutions
to a nonlinear collision-induced breakage equation in which binary
collisions may trigger particle breakup. The result is proved for a class
of product-type collision kernels whose small-size behavior is controlled by
a power-law function of the form $\omega_0(x)\le A_1\,x^\ell$,
while no growth restriction is imposed on the large-size factor
$\omega_\infty$.
The qualitative behavior of the solutions depends crucially on the exponent
$\ell$ near the origin. Sublinear growth corresponding to $\ell<\tfrac12$
yields existence only on finite time intervals, whereas superlinear growth
corresponding to $\ell>\tfrac12$ ensures global-in-time existence.}
\end{quote}

\vspace{.3cm}

\noindent
{\rm\bf Mathematics Subject Classification (2020).}\ 45K05, 35A01.

\noindent
{\bf Keywords.}\ Collision-induced fragmentation; Existence of solutions;
Mass conservation; Weak solutions; Compactness; Product-type collision kernel.
%%%%%%%%%%%%%%%%%%%%%%%%%%%%%%%%%%%%
\section{\textbf{Introduction}}
%%%%%%%%%%%%%%%%%%%%%%%%%%%%%%%%%%%%

Binary collision processes leading to the breakage of clusters play an
important role in several physical and astrophysical phenomena.
The corresponding evolution of the cluster-size distribution is commonly
described by the collision-induced, or nonlinear, breakage equation.
Such models arise naturally in the study of cloud droplet dynamics and in
theories related to the formation of planetary bodies \cite{LG1976,SV1972,SRC1978,BKB2015}. The evolution of the cluster density is governed by an integro-differential equation balancing the creation and depletion of clusters through
collision-induced breakage events. More precisely, we study
\begin{subequations}
\begin{align}
    \frac{\partial}{\partial t}f(t,x)
    &= \int_0^{\infty}\!\int_x^{\infty}
       b(x,y,z)\,a(y,z)\,f(t,y)\,f(t,z)\,dy\,dz
       -\int_0^{\infty} a(x,y)\,f(t,x)\,f(t,y)\,dy,
    \label{NLBE}\\
    f(0,x) &= f^{\rm in}(x).
    \label{IC}
\end{align}
\end{subequations}
In \eqref{NLBE}, the collision kernel $a$ determines the rate at which
particles collide, while the daughter distribution function $b$ describes
the fragments produced after a collision event. In the no-mass-transfer
setting, we assume that $b$ is a nonnegative function such that, for
$(x,y,z)\in(0,\infty)^3$, the quantity $b(x,y,z)$ denotes the
distribution of fragments of size $x\in(0,y)$ generated from a particle
of size $y$ after colliding with a particle of size $z$. Moreover,
\begin{align}
\int_0^y x\,b(x,y,z)\,dx = y,
\qquad
b(x,y,z)=0 \quad \text{for } x>y.
\label{LMC}
\end{align}
The above class of daughter distributions excludes the transfer of mass
between colliding particles during breakup, since fragments generated from
a particle of size $y$ arise only from that particle and carry exactly its
mass. As a consequence, no mass is exchanged between colliding partners
during fragmentation. In equation~\eqref{NLBE}, the gain term describes
the creation of particles of size $x$ through collisions between particles
of sizes $y$ and $z$, with $y>x$ and $z\in(0,\infty)$, whose fragmentation
may produce daughter particles of size $x$. The loss term accounts for the
removal of particles of size $x$ as they undergo collisions with particles
of arbitrary size. Since the focus of the present work is on nonlinear
breakage processes, where collisions between two clusters lead to the breakage
of one of the colliding clusters into smaller clusters, we also mention the
class of linear breakage equations for comparison. In linear breakage models,
clusters undergo breakage due to internal mechanisms such as instability,
decay, or other intrinsic effects, rather than through binary collisions.
The mathematical theory of linear breakage equations with and without coagulation is well developed and has been extensively studied from both mathematical and physical perspectives.
We refer to the monograph \cite{BLL2019} for a comprehensive overview of
the subject and further references therein.

Several works in the physics literature have focused on the qualitative and
asymptotic behaviour of collision-induced breakage processes. Different
fragmentation scenarios, including equal binary splitting and breakage of one
of the colliding particles, were analysed in \cite{CHNG1988,CHNG1990}.
Travelling-wave techniques were later employed in \cite{Krapivsky2003} to
describe the evolution of the fragment size distribution. Approximate
analytical descriptions based on Gamma distributions were also proposed in
\cite{KK2000,KK2006}. In addition, for the product-type kernel
$a(x,y)=(xy)^{\lambda/2}$ with $0\leq \lambda\leq2$, the asymptotic dynamics
were studied in \cite{EP2007}, where the nonlinear model was shown to admit a
connection with a corresponding linear breakage equation. The mathematical study of the nonlinear collision-induced breakage equation without mass transfer was initiated in \cite{AKG2021I}, where existence and uniqueness of weak solutions were investigated for collision kernels of the form
\begin{align*}
a(x,y)=x^{\alpha}y^{\beta}+x^{\beta}y^{\alpha},
\qquad \alpha\leq \beta \leq 1.
\end{align*}
The analysis revealed a threshold behaviour with respect to the homogeneity
$\alpha+\beta$: global mass-conserving weak solutions were obtained for
$\alpha+\beta\in[1,2]$, while only local-in-time mass-conserving solutions
exist for $\alpha+\beta\in[0,1)$. The theory was later extended in
\cite{RJ2024} to include non-integrable daughter distribution functions,
where analogous existence, uniqueness, and non-existence results were
established under suitable assumptions on the singularity of the fragmentation
kernel. More recently, the existence of mass-conserving self-similar solutions,
together with qualitative properties of the scaling profiles, was studied in
\cite{RJ2025} using compactness and dynamical methods. For the discrete setting, the discrete counterpart of \eqref{NLBE}--\eqref{IC} has been investigated in\cite{AliLaurencot2026}, where global existence of solutions is established for a broad class of collision kernels without imposing any growth assumptions. Several works addressing existence and uniqueness in which coagulation is incorporated together with collision-induced breakage are also available; see\cite{Barik2020,BarikGiri2020,BarikGiri2021,GiriLaurencot2021}. The present work is motivated by the study of Lauren\c{c}ot \cite{Laurencot2000}, where the existence of solutions to the continuous coagulation--fragmentation equation was established for
product-type coagulation kernels under weak fragmentation assumptions.

\medskip
\noindent\textbf{Kernel class.}
In this article, we discuss the well-posedness of \eqref{NLBE}--\eqref{IC}
for the following class of collision kernels. We assume
\begin{align}
a(x,y)=a(y,x)\ge0,
\qquad (x,y)\in \mathbb{R}_+^2,
\label{eq:kernel-asm1}
\end{align}
and, more specifically,
\begin{align}
a(x,y)= A_0
\begin{cases}
\omega_0(x)\,\omega_0(y), & 0<x,y\le 1,\\[1mm]
\omega_0(x)\,\omega_\infty(y), & x\le 1<y,\\[1mm]
\omega_\infty(x)\,\omega_0(y), & y\le 1<x,\\[1mm]
\omega_\infty(x)\,\omega_\infty(y), & x,y\ge 1,
\end{cases}
\label{eq:kernel-class}
\end{align}
where $\omega_0,\omega_\infty:[0,\infty)\to[0,\infty)$ are continuous
non-decreasing functions and $A_0>0$ is a positive constant.
In other words, $a(x,y)=A_0\,\omega(x)\,\omega(y)$, where
$\omega=\omega_0$ on $(0,1]$ and $\omega=\omega_\infty$ on $(1,\infty)$.
A symmetric kernel belongs to this class if and only if it can be written in
the factored form $\psi(x)\psi(y)$ for some non-decreasing $\psi\ge0$.

The following kernels belong to the class \eqref{eq:kernel-class} and
illustrate a variety of admissible growth behaviours near the origin and
at infinity.
\begin{itemize}
\item[(I)]
$a(x,y)=A_0\,(xy)^\ell$,\quad $\ell>0$,\quad
$\omega(x)=x^\ell$.
\item[(II)]
$a(x,y)=A_0\,(xy)^\ell(1+x)^\beta(1+y)^\beta$,\quad
$\ell>0,\ \beta\ge0$,\quad $\omega(x)=x^\ell(1+x)^\beta$.
\item[(III)]
$a(x,y)=A_0\,(xy)^\ell e^{\gamma(x+y)}$,\quad
$\ell>0,\ \gamma>0$,\quad $\omega(x)=x^\ell e^{\gamma x}$.
\item[(IV)]
$a(x,y)=A_0\,(xy)^\ell
(\log(1+x))^\gamma(\log(1+y))^\gamma$,\quad
$\ell>1,\ \gamma>0$,\quad
$\omega(x)=x^\ell(\log(1+x))^\gamma$.
\item[(V)]
$a(x,y)=A_0\,(xy)^\ell e^{\gamma(x^\nu+y^\nu)}$,\quad
$\ell>0,\ \gamma>0,\ \nu>0$,\quad
$\omega(x)=x^\ell e^{\gamma x^\nu}$.
\item[(VI)]
$a(x,y)=A_0\,\dfrac{(xy)^\ell}{(1+x)^\mu(1+y)^\mu}$,\quad
$\ell>\mu>0$,\quad
$\omega(x)=\dfrac{x^\ell}{(1+x)^\mu}$.
\item[(VII)]
$a(x,y)=A_0\,(xy)^\ell(2-e^{-x})(2-e^{-y})$,\quad
$\ell>0$,\quad $\omega(x)=x^\ell(2-e^{-x})$.
\item[(VIII)]
$a(x,y)
=A_0\,\omega_0(x)\omega_0(y)\,\mathbf{1}_{\{x,y\le1\}}
+A_0\,x^p y^p\,\mathbf{1}_{\{x,y\ge1\}}$,\quad
$\omega_0(x)=x^\ell$,\quad $p>\ell>0$.
\end{itemize}

In addition to the conservation of mass \eqref{LMC}, we assume that
the number of fragments produced in a collisional breakage event is
uniformly bounded independently of the sizes of the colliding particles.
More precisely, there exists $\beta_0\ge2$ such that
\begin{align}
\int_0^y b(x,y,z)\,dx \le \beta_0,
\qquad (y,z)\in(0,\infty)^2.
\label{NOP}
\end{align}
In addition to \eqref{LMC} and \eqref{NOP}, we impose the following
structural assumption on the daughter distribution $b$. There exist
$p\in(1,2)$ and $B_p>0$ such that
\begin{align}
\int_0^y b(x,y,z)^p\,dx
\le \frac{B_p}{2}\,y^{1-p},
\qquad (y,z)\in(0,\infty)^2.
\label{eq:p-cond}
\end{align}

The main result of this paper is the following theorem.

\begin{theorem}\label{Thm:GE}
Assume \eqref{eq:kernel-asm1}--\eqref{eq:kernel-class},
\eqref{LMC}, \eqref{NOP}, \eqref{eq:p-cond},
and \eqref{eq:w0-growth}. Let $f^{\mathrm{in}}
\in \Xi_0 \cap \Xi_g^+$. Then the following hold.
\begin{enumerate}
\item[\upshape(i)]
\textbf{Global existence.}
Assume condition \eqref{eq:Case2}, i.e., $\ell > \tfrac{1}{2}$.
Then there exists a global weak solution
\[
f \in \mathcal{C}([0,\infty);\Xi_0)
\cap L^\infty((0,\infty);\Xi_g^+)
\]
to \eqref{NLBE}--\eqref{IC} in the sense of Definition~\ref{DEF1},
satisfying
\begin{align}
\int_0^\infty x\,f(t,x)\,dx \le \Theta,
\qquad t \ge 0.
\label{eq:mass-ineq-limit}
\end{align}
\item[\upshape(ii)]
\textbf{Finite-time existence.}
Assume condition \eqref{eq:Case1}, i.e.,
$\ell \in \bigl(0,\tfrac{1}{2}\bigr)$.
Then for every $T > 0$ there exists a weak solution
\[
f \in \mathcal{C}([0,T];\Xi_0) \cap L^\infty((0,T);\Xi_g^+)
\]
to \eqref{NLBE}--\eqref{IC} on $[0,T]$
in the sense of Definition~\ref{DEF1},
satisfying
\[
\int_0^\infty x\,f(t,x)\,dx \le \Theta,
\qquad t \in [0,T].
\]
\end{enumerate}
In both cases the solution is obtained as the limit of the truncated
solutions $(f_n)_{n\ge1}$ constructed in Proposition~\ref{prop:tr-exist},
and $\Theta := \int_0^\infty x\,f^{\mathrm{in}}(x)\,dx$ denotes the
initial first moment.
\end{theorem}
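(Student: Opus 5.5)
We follow the standard weak $L^1$ compactness route, in the spirit of \cite{Laurencot2000,AKG2021I}, starting from the truncated solutions $(f_n)_{n\ge1}$ of Proposition~\ref{prop:tr-exist}. For these we take as known that $f_n\ge0$, that $f_n\in\mathcal{C}([0,T];L^1)$, and that the truncated mass is non-increasing, so that $\int_0^\infty x f_n(t,x)\,dx\le\Theta$.

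The first step is the set of uniform bounds needed for weak compactness in $\Xi_0$.

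\emph{Superlinear moment at infinity.} Assume $g$ is the convex weight defining $\Xi_g^+$, with $g(x)/x\to\infty$. Since there is no mass transfer, \eqref{LMC} gives
\[
\int_0^y g(x)\,b(x,y,z)\,dx\le g(y).
\]
Hence the contribution of the large sizes to $\int g f_n\,dx$ is non-increasing. This gives uniform integrability at infinity with no growth restriction on $\omega_\infty$.

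\emph{Control near the origin.} Here the number of particles $\int_0^\infty f_n\,dx$ must be controlled. By \eqref{NOP}, and using $\omega(y)\le A_1y^\ell$ on $(0,1]$, we get
\[
\frac{d}{dt}\int_0^\infty f_n\,dx\le(\beta_0-1)A_0\Big(\int_0^\infty\omega f_n\,dx\Big)^2.
\]
The key estimate is to bound $\int_0^1\omega f_n\le A_1\int_0^1 x^\ell f_n$ by interpolating between the zeroth and first moments, which gives $\le A_1 M_0^{1-\ell}\Theta^\ell$. The contribution of $(1,\infty)$ is bounded through $\omega_\infty$. This is most naturally handled by using the loss term: $\int\omega f_n$ also appears with a negative sign in the evolution of the moment $\int_0^1 f_n$, and the gain is weighted by the same factor $\omega(y)$ through the product structure of $a$.

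We therefore obtain a differential inequality of the form $\dot M_0\le C M_0^{2(1-\ell)}$.
\begin{itemize}
\item If $\ell>\tfrac12$, the exponent $2(1-\ell)<1$, so $M_0$ grows at most polynomially. This yields bounds on every $[0,T]$, i.e.\ case (i).
\item If $\ell<\tfrac12$, the exponent exceeds $1$, and only a finite-time bound is available. This corresponds to case (ii), where the bound holds on the interval provided by the estimate.
\end{itemize}
In case (ii) I expect the existence time to depend on $f^{\rm in}$ through $M_0(0)$ and $\Theta$. The phrase ``for every $T$'' would then have to come from the specific structure of \eqref{eq:Case1}, which I would check. I expect this step to be the main obstacle: the contribution of $\omega_\infty$, which is unrestricted, has to be controlled by exploiting the product form of $a$ and the monotonicity of $\omega$.

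\emph{Uniform integrability on bounded sets.} Next we exclude concentration, using \eqref{eq:p-cond}. By Hölder's inequality,
\[
\int_E b(x,y,z)\,dx\le\Big(\frac{B_p}{2}\Big)^{1/p}y^{(1-p)/p}|E|^{1-1/p}.
\]
Combining this with a de la Vallée-Poussin convex function $\Phi$ applied to $f^{\rm in}$, we obtain that $\sup_n\sup_{t\le T}\int_0^R\Phi(f_n)\,dx$ is finite. Alternatively, we bound $\int_E f_n$ directly by a Gronwall argument using the already established bounds on $\int\omega f_n$.

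\emph{Equicontinuity in time.} From the equation we derive equicontinuity of $t\mapsto\int\phi f_n$ for $\phi\in L^\infty$.

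By the Dunford--Pettis and Arzelà--Ascoli theorems, a subsequence satisfies $f_n\to f$ in $\mathcal{C}([0,T];w\text{-}L^1)$. The $x$-weighted bounds then upgrade this to convergence in $\Xi_0$ weakly, locally uniformly in time, and a diagonal argument in $T$ covers case (i).

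Finally, we pass to the limit in the weak formulation of Definition~\ref{DEF1}. Because $a(y,z)=A_0\omega(y)\omega(z)$ is a product, the nonlinear terms factor as
\[
\int\!\!\int\phi\ldots=\int\Big(\int_0^y\phi(x)\,b\,dx\Big)\omega(y)f_n(y)\,dy\;\int\omega(z)f_n(z)\,dz
\]
for compactly supported test functions $\phi$. The truncation of large $y,z$ is controlled by the superlinear $g$-moment together with uniform bounds on $\int\omega f_n$ near the truncation. We expect to need $\omega\le Cg$, or else the integrability of $\omega f$ furnished in the definition. The mass inequality \eqref{eq:mass-ineq-limit} follows from weak lower semicontinuity, and $f\in L^\infty(\Xi_g^+)$ follows from Fatou's lemma.
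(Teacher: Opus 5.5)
There is a genuine gap, and it is the step you yourself flag as the main obstacle. Your overall route matches the paper's: truncation, a Riccati/Gr\"onwall bound on $M_0^n$, equi-integrability from \eqref{eq:p-cond} via H\"older, Dunford--Pettis plus Arzel\`a--Ascoli, then passage to the limit. Your interpolation $\int_0^1x^\ell f_n\,dx\le (M_0^n)^{1-\ell}\Theta^\ell$ is also a valid substitute for the paper's Cauchy--Schwarz step $\bigl(\int_0^1y^\ell f_n\,dy\bigr)^2\le\mathcal{J}(s)M_0^n(s)$.

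The problem is that nothing in your argument controls $\int_1^\infty\omega_\infty f_n\,dx$. The zeroth-moment bound, your alternative bound on $\int_E f_n$, the equicontinuity, and the removal of large $y,z$ in the limit all depend on it.
\begin{itemize}
\item Non-increase of $\int g f_n$ is all that $\int_0^y g(x)b(x,y,z)\,dx\le g(y)$ gives you. It bounds no $\omega_\infty$-weighted quantity once $\omega_\infty$ grows faster than $g$.
\item The loss-term idea does not close. Collisions with $y>1$ feed $(0,1)$ at a rate that \eqref{NOP} only bounds by $\beta_0A_0\bigl(\int_1^\infty\omega_\infty f_n\bigr)\bigl(\int_0^\infty\omega f_n\bigr)$. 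The loss from $(0,1)$ is $A_0\bigl(\int_0^1\omega_0 f_n\bigr)\bigl(\int_0^\infty\omega f_n\bigr)$, so it cannot absorb this gain.
\end{itemize}
Hence the pointwise inequality $\dot M_0\le CM_0^{2(1-\ell)}$ is not established. At the end you are also pushed toward a hypothesis $\omega\le Cg$ that the theorem does not make.

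The missing idea is the \emph{strict} dissipativity built into $\mathcal{G}_b$: $g(y)-\int_0^y g(x)b(x,y,z)\,dx\ge\theta g(y)$ with $\theta>0$. In the $g$-moment identity this gives $\theta\int_0^t\int_0^n\int_0^n g(y)a(y,z)f_n(s,y)f_n(s,z)\,dy\,dz\,ds\le C_0$ (Corollary~\ref{COR1}). Testing with $g\mathbf{1}_{(m,n)}$ (Lemma~\ref{Lem:Tail}) then yields
\[
\int_0^T\Bigl(\int_m^n\omega_\infty(y)f_n(s,y)\,dy\Bigr)^2ds\le\frac{C_0}{\theta\,A_0\,g(m)},
\]
with no growth condition on $\omega_\infty$. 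This is only $L^2$ control in time, so $M_0^n$ must be estimated in integral form, after splitting the cross term by Young:
\[
M_0^n(t)\le M_0^n(0)+A\int_0^t\mathcal{J}(s)M_0^n(s)\,ds+\frac{AC_0}{\theta g(1)} .
\]
Your interpolation can be inserted at this point. The factor $g(m)^{-1}$ is what makes $\mathcal{T}_2^n(a)$ and $\mathcal{T}_3^n(a)$ small uniformly in $n$, via Cauchy--Schwarz in $s$. So no comparison of $\omega$ with $g$ is needed.

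A generic convex $g$ with $g(x)/x\to\infty$ will not do. For $b(x,y,z)=2/y$ and $g(x)=x\log x$ at infinity, one finds $g(y)-\int_0^y g\,b\,dx\sim y/2=o(g(y))$, so no $\theta>0$ exists. The weight must be taken in $\mathcal{G}_b$.

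Finally, your doubt about ``for every $T$'' in (ii) is justified, and the paper does not resolve it. Lemma~\ref{Lem:ZM}(a) is derived from \eqref{eq:Riccati} by a ``standard comparison argument''. But comparison with $u'=Au^2$ bounds $M_0^n$ only up to a time of order $\bigl(A(\|f^{\mathrm{in}}\|_{\Xi_0}+AC_0/(\theta g(1)))\bigr)^{-1}$. So both your argument and the paper's give (ii) only on a data-dependent interval.
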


The remainder of the paper is organized as follows.
In Section~\ref{sec:framework} we introduce the functional framework,
including the weighted Lebesgue spaces, the admissible weight class
$\mathcal{G}_b$, the precise notion of weak solution to
\eqref{NLBE}--\eqref{IC}, and the truncated approximating problem.
Section~\ref{sec:estimates} is devoted to the derivation of uniform
estimates for the truncated solutions: we establish existence of the
truncated solutions in Proposition~\ref{prop:tr-exist}, derive weighted
moment bounds in Corollary~\ref{COR1}, obtain tail and large-size
estimates in Lemma~\ref{Lem:Tail}, control the zeroth moment in
Lemma~\ref{Lem:ZM}, and prove uniform integrability and time
equicontinuity in Lemmas~\ref{Lem:UI} and~\ref{Lem:time-eqc}
respectively.
The proof of Theorem~\ref{Thm:GE} is carried out in
Section~\ref{sec:proof}, where the compactness argument is completed
via the Dunford--Pettis theorem and the Arzel\`{a}--Ascoli theorem, and
the limit is identified as a weak solution by passing to the limit in
the truncated weak formulation.
Finally, Section~\ref{sec:mass} establishes the mass-conservation
property of the constructed solution in Proposition~\ref{Prop:MC}.
%%%%%%%%%%%%%%%%%%%%%%%%%%%%%%%%%%%%
\section{\textbf{Functional Framework}}\label{sec:framework}
%%%%%%%%%%%%%%%%%%%%%%%%%%%%%%%%%%%%

\subsection*{Function spaces}

Let $g:(0,\infty)\to[0,\infty)$ be a measurable weight function. We define
the weighted Lebesgue space
\[
\Xi_g:=L^1\bigl((0,\infty),g(y)\,dy\bigr),
\]
equipped with the norm
\[
\|H\|_{\Xi_g}
:=
\int_0^\infty |H(y)|\,g(y)\,dy,
\qquad H\in \Xi_g,
\]
and the associated weighted moment functional
\[
M_g(H)
:=
\int_0^\infty H(y)\,g(y)\,dy,
\qquad H\in \Xi_g.
\]
The positive cone of $\Xi_g$ is
\[
\Xi_g^{+}:=\{H\in \Xi_g \mid H(y)\ge0 \ \text{a.e.\ on } (0,\infty)\},
\]
and $\Xi_{g,w}$ denotes $\Xi_g$ endowed with its weak topology.

An important family of weights is given by
$g_m(y):=y^m$, $y>0$, $m\in\mathbb{R}$.
In this case we write $\Xi_m:=\Xi_{g_m}$ and define
\[
M_m(q)
:=
\int_0^\infty y^m q(y)\,dy,
\qquad q\in \Xi_m.
\]
In particular, $\Xi_0=L^1(0,\infty)$,
while $M_0(h)$ represents the total number of particles and
$M_1(h)$ the total mass in the system.

\subsection*{Admissible weight class}

We consider weight functions $g$ belonging to the admissible class
\begin{align}
\mathcal{G}_b :=
\Bigg\{
g \in L^1_{\mathrm{loc}}(\mathbb{R}_+) \;\Big|\;
g(x)>0 \ \text{for all } x>0,\quad
\frac{g(x)}{x}\ \text{non-decreasing on }(0,\infty),\notag\\
g(y)-\int_0^y g(x)\,b(x,y,z)\,dx
\ge \theta\,g(y),
\quad (y,z)\in(0,\infty)^2
\Bigg\},
\label{def:G}
\end{align}
for some constant $\theta \in (0,1)$ depending only on $g$. Throughout,
we set
\[
C_0:=\int_0^{\infty}g(x)\,f^{\rm in}(x)\,dx<\infty.
\]

\begin{remark}\label{rem:general-admissible}
For the daughter distribution
\[
b(x,y,z)=\frac{\nu+2}{y^{\nu+1}}x^\nu,
\qquad \nu\in(-1,0],
\]
the dissipativity condition
\[
g(y)-\int_0^y g(x)\,b(x,y,z)\,dx
\ge \theta_\alpha\,g(y),
\qquad \theta_\alpha=\frac{\alpha-1}{\nu+\alpha+1}\in(0,1),
\]
is satisfied by several natural classes of weight functions.
In particular, the following families belong to the admissible class
$\mathcal{G}_b$ whenever $\alpha>1$,
\begin{align*}
g(x)&=x^{\alpha},\\
g(x)&=x^\alpha(1+x)^\beta,
\qquad \beta\ge0,\\
g(x)&=x^\alpha e^{\lambda x},
\qquad \lambda>0,\\
g(x)&=x^\alpha(\log(1+x))^\gamma,
\qquad \gamma>0.
\end{align*}
More generally, any function of the form $g(x)=x^\alpha h(x)$ with
$\alpha>1$ and $h:(0,\infty)\to[0,\infty)$ non-decreasing satisfies
the dissipativity condition with constant $\theta_\alpha$.
\end{remark}

Other examples of daughter distribution functions, originally introduced
in \cite[Section~6]{KLL2025} in the context of the linear fragmentation
equation, are given by
\[
b(x,y,z)=
\begin{cases}
1,
& x\in [0,1]\cup [y-1,y],\quad y>2,\\[2mm]
0,
& x\in (1,y-1),\quad y>2,\\[2mm]
\dfrac{2}{y},
& y\le 2,
\end{cases}
\]
and
\[
b(x,y,z)=
\begin{cases}
y,
& x\in \bigl[0,\tfrac1y\bigr]
\cup
\bigl[y-\tfrac1y,y\bigr],\quad y>\sqrt2,\\[3mm]
0,
& x\in \bigl(\tfrac1y,\,y-\tfrac1y\bigr),\quad y>\sqrt2,\\[3mm]
\dfrac{2}{y},
& y\le \sqrt2.
\end{cases}
\]

We now state the definition of a weak solution to
\eqref{NLBE}--\eqref{IC} used throughout the paper.

\begin{definition}\label{DEF1}
Let $T \in (0,\infty]$ and let the daughter distribution $b$ satisfy
\eqref{LMC} and \eqref{NOP}. Given $f^{\rm in} \in \Xi_0 \cap \Xi_g^+$,
a \emph{weak solution} to \eqref{NLBE}--\eqref{IC} on $[0,T)$ is a
nonnegative function
\[
f \in \mathcal{C}([0,T);\Xi_{0,w}) \cap L^\infty((0,T);\Xi_g^+)
\]
such that $(s,x,y) \mapsto a(x,y)f(s,x)f(s,y)
\in L^1\bigl((0,t)\times(0,\infty)^2\bigr)$,
and, for all $t \in (0,T)$ and $\psi \in L^\infty(0,\infty)$,
\begin{align}
\int_0^\infty \psi(x)\bigl(f(t,x)-f^{\rm in}(x)\bigr)\,dx
= \frac{1}{2}\int_0^t \int_0^\infty \int_0^\infty
\tilde{\psi}(y,z)\, a(y,z)\, f(s,y)\,f(s,z)\,dy\,dz\,ds,
\label{eq:def-wf}
\end{align}
where
\begin{align}
\tilde{\psi}(y,z)
:= \int_0^{y} \psi(x)\, b(x,y,z)\,dx - \psi(y),
\qquad (y,z)\in (0,\infty)^2.
\label{def:tilde-psi}
\end{align}
Moreover, a weak solution $f$ is said to be \emph{mass-conserving} if
$M_1(f(t)) = M_1(f^{\rm in})$ for all $t \in [0,T)$.
\end{definition}

\subsection*{Truncated problem}

Next, we define the approximation used to construct solutions.
Set
\begin{align}
f_n^{\rm in}(x) = f^{\rm in}(x)\,\mathbf{1}_{(0,n)}(x),
\qquad
a_n(y,z) = a(y,z)\,\mathbf{1}_{(0,n)}(y)\,\mathbf{1}_{(0,n)}(z),
\end{align}
and consider the truncated integro-differential equation
\begin{subequations}
\begin{align}
\frac{\partial}{\partial t}f_n(t,x)
&= \int_0^{n}\!\int_x^{n}
   b(x,y,z)\,a(y,z)\,f_n(t,y)\,f_n(t,z)\,dy\,dz
   -\int_0^{n} a(x,y)\,f_n(t,x)\,f_n(t,y)\,dy,
\label{Tr:NLBE}\\
f_n(0,x) &= f_n^{\rm in}(x).
\label{Tr:IC}
\end{align}
\end{subequations}
For a weight function $g$, the truncated weighted moment of the approximate solution $f_n$ is
\begin{align*}
M_g^n(t) :=\int_0^n g(x)\,f_n(t,x)\,dx, \qquad t\ge0.
\end{align*}
In the special case $g(x)=x^m$, $m\ge0$, we write
\[
M_m^n(t)
:=
\int_0^n x^m\,f_n(t,x)\,dx.
\]
The quantity $M_0^n(t)=\int_0^n f_n(t,x)\,dx$ is the zeroth moment (total
number of particles), and $M_1^n(t)=\int_0^n x\,f_n(t,x)\,dx$ is the
first moment (total mass) in the truncated system.

%%%%%%%%%%%%%%%%%%%%%%%%%%%%%%%%%%%%
\section{\textbf{Uniform Estimates}}\label{sec:estimates}
%%%%%%%%%%%%%%%%%%%%%%%%%%%%%%%%%%%%
The constants $A_0>0$ and $A_1>0$ introduced in \eqref{eq:kernel-class} and \eqref{eq:w0-growth} are fixed throughout. In all proofs and intermediate estimates, any finite combination of $A_0$ and $A_1$ is absorbed into a single generic positive constant $A$, independent of $n$, whose value may change from line to line.
%%%%%%%%%%%%%%%%%%%%%%%%%%%%%%%%%%%%%%%%%%%%%%%%%%%%%%%%%%%%%%%%%%%%%%%%%%%%%%%%%%%%%%%%%%%%%%%%%%%%%%%%%%%%%%%%%%%%%%%%%%%%%%%%%%%%%%%%%%%%%%%%%%%%%%%%%%%%%%%%%%%%%%%%%%%%%%%%%%%%%%%%%%%%%%%%%%%%%%%%%%%%%%%%%%%%%%%%%%%%%%%%%%%%%%%%%%%%%%%%%%%%%%%%%%%%%%%%%%%%%%%%%%%%%%%%%%%%
\begin{prop}\label{prop:tr-exist}
Let $n \ge 1$. Then there exists a unique non-negative solution
$f_n \in C^1\bigl([0,\infty);\,L^1(0,n)\bigr)$
to \eqref{Tr:NLBE}--\eqref{Tr:IC}. Moreover, for $g\in L^{\infty}(0,n)$,
the following identity holds for all $t \ge 0$,
\begin{subequations}
\begin{align}
\int_0^n g(x)\,f_n(t,x)\,dx
&+
\int_0^t\!\int_0^n\!\int_0^n \tilde{g}(y,z)\,
a(y,z)\,f_n(s,y)\,f_n(s,z)\,dy\,dz\,ds= \int_0^n g(x)\,f_n^{\mathrm{in}}(x)\,dx 
\label{Tr:WF}
\end{align}

In addition, $f_n$ satisfies the truncated mass conservation property
\begin{align}
\int_0^n x\,f_n(t,x)\,dx
=
\int_0^n x\,f_n^{\mathrm{in}}(x)\,dx,
\qquad t\ge0.
\label{eq:TrMC}
\end{align}
\end{subequations}
\end{prop}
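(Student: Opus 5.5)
We treat \eqref{Tr:NLBE}--\eqref{Tr:IC} as an ODE $\partial_t f_n=\mathcal{F}_n(f_n)$ in the Banach space $L^1(0,n)$, where $\mathcal{F}_n$ denotes the right-hand side. On $(0,n)^2$ the kernel is bounded, since $\omega_0,\omega_\infty$ are continuous and non-decreasing: $a(y,z)\le A_0\,\omega(n)^2=:K_n$. By \eqref{NOP}, the gain operator is a bounded bilinear map on $L^1(0,n)$:
\[
\int_0^n\!\int_0^n\!\int_0^n b(x,y,z)\,a(y,z)\,|u(y)||v(z)|\,dx\,dy\,dz\le \beta_0K_n\|u\|_{L^1}\|v\|_{L^1}.
\]
The loss term has the same property, with constant $K_n$. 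Hence $\mathcal{F}_n$ is locally Lipschitz on $L^1(0,n)$, and the Cauchy--Lipschitz theorem gives a unique maximal solution $f_n\in C^1([0,T_n);L^1(0,n))$.

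Next we establish non-negativity. The plan is to write the loss term as $-L_n(t,x)f_n(t,x)$ with $L_n(t,x)=\int_0^n a(x,y)f_n(t,y)\,dy$. We then consider the modified problem in which $f_n$ is replaced by $f_n^+$ in the gain term and in $L_n$. This problem is still locally Lipschitz, and its solution satisfies the Duhamel formula
\[
f_n(t)=e^{-\int_0^tL_n}f_n^{\rm in}+\int_0^te^{-\int_s^tL_n}\,(\text{gain})(s)\,ds,
\]
whose right-hand side is $\ge0$. So this solution is non-negative and therefore solves the original problem; by uniqueness it coincides with $f_n$. Alternatively, we can test against $\operatorname{sign}(f_n)^-$ and use Gronwall's lemma.

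For the identity \eqref{Tr:WF}, take $g\in L^\infty(0,n)$, multiply \eqref{Tr:NLBE} by $g(x)$, and integrate over $(0,n)$. All integrals are absolutely convergent by the bounds above, so Fubini applies. In the gain term we exchange the $x$ and $y$ integrations, using $b(x,y,z)=0$ for $x>y$, to obtain $\int_0^n\!\int_0^n\big(\int_0^y g(x)b(x,y,z)\,dx\big)a f_nf_n\,dy\,dz$. The loss term is $\int\!\int g(y)a f_n f_n$. Their difference is $\int\!\int \tilde g\, a f_n f_n$ up to sign, which yields \eqref{Tr:WF} after integrating in time. (Symmetrizing in $(y,z)$ via \eqref{eq:kernel-asm1} gives the factor $\tfrac12$ form if needed. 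Note that the statement as written carries no $\tfrac12$ and has the opposite sign convention, so I would check the sign against \eqref{def:tilde-psi}: with $\tilde g=\int gb-g$, the correct identity has $-$ in front.)

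Taking $g(x)=x\in L^\infty(0,n)$, \eqref{LMC} gives $\tilde g\equiv0$, which is \eqref{eq:TrMC}.

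The main obstacle is global existence, $T_n=\infty$, since $\mathcal{F}_n$ is only locally Lipschitz. We will obtain an a priori $L^1$ bound. Choosing $g\equiv1$ gives
\[
\frac{d}{dt}M_0^n\le(\beta_0-1)K_n (M_0^n)^2,
\]
which allows blow-up, so this crude bound is insufficient. Instead we use the conserved mass together with non-negativity:
\[
\frac{d}{dt}M_0^n\le\beta_0\int\!\int a f_nf_n\le\beta_0A_0\Big(\int_0^n\omega f_n\Big)^2.
\]
Since $\omega$ is bounded on $(0,n)$ but not controlled by $x$ near $0$, this still does not close. The fix is to use the bound $\omega(x)\le \omega(n)$, bound only one factor $\int\omega f_n$ by $\omega(n)M_0^n$, and use the structure of the loss term: a quadratic loss $-\int\omega f_n\cdot\int\omega f_n$ that dominates when $\beta_0$ is replaced by the actual number of fragments. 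If this fails, I would bound $M_0^n$ via a weighted norm $\int_0^n(1+x^{-\epsilon})f_n$ or rely on linear growth in a truncation where the number of particles is controlled by the mass. Concretely, I would first try to show $\|f_n(t)\|_{L^1}\le C_n e^{C_nt}$ using \eqref{eq:TrMC}, and then exclude finite-time blow-up by the standard continuation criterion.
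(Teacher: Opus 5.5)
Your local theory (Cauchy--Lipschitz in $L^1(0,n)$, and positivity through the problem modified with $f_n^+$) is fine. So are your derivations of the weak identity and of \eqref{eq:TrMC}. Your remark on signs is also correct: with $\tilde g$ as in \eqref{def:tilde-psi}, the time integral in \eqref{Tr:WF} must carry a minus sign. The paper tacitly uses $g(y)-\int_0^y g\,b\,dx$ there and in Corollary~\ref{COR1}, and a factor $\tfrac12$ only appears after symmetrizing $\tilde g$.

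The genuine gap is the one you flag yourself: you never prove $T_n=\infty$, and none of the routes you list can work as stated. The loss term can never dominate. Because $b(\cdot,y,z)$ lives on $(0,y)$, \eqref{LMC} forces $N(y,z):=\int_0^y b(x,y,z)\,dx\ge 1$, and therefore
\[
\frac{d}{dt}M_0^n=\int_0^n\!\int_0^n\bigl(N(y,z)-1\bigr)\,a(y,z)\,f_n(t,y)\,f_n(t,z)\,dy\,dz\ \ge\ 0 .
\]
Weights such as $x^{-\epsilon}$ only make the small-size difficulty worse. More importantly, without information on $\omega_0$ near $0$ the conclusion is false:
\begin{itemize}
\item For $\omega\equiv 1$ and $b(x,y,z)=2/y$ one gets exactly $\frac{d}{dt}M_0^n=A_0\,(M_0^n)^2$. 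Hence $\|f_n(t)\|_{L^1(0,n)}$ blows up at $t=1/(A_0M_0^n(0))$.
\item For $\omega(x)=x^\ell$ with $\ell<\tfrac12$, the time change $\tau=A_0\int_0^t M_\ell^n(s)\,ds$ turns the truncated problem into a linear fragmentation equation. Its $\ell$-th moment grows like $\tau^{(1-\ell)/\ell}$, so $t=\int_0^\tau d\sigma/(A_0M_\ell)$ stays bounded while $M_0^n\to\infty$ (shattering).
\end{itemize}

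What closes the argument is the small-size bound \eqref{eq:w0-growth} with $\ell\ge\tfrac12$ (Case~2). On the maximal interval, positivity and \eqref{eq:TrMC} give $M_1^n\le\Theta$. Since $x^{2\ell}\le x$ on $(0,1)$, Cauchy--Schwarz yields
\[
\int_0^n\omega\,f_n\,dx\le A_1\Bigl(\int_0^1 x^{2\ell}f_n\,dx\Bigr)^{1/2}\bigl(M_0^n\bigr)^{1/2}+\omega_\infty(n)\,M_1^n\le A_1\bigl(\Theta\,M_0^n\bigr)^{1/2}+\omega_\infty(n)\,\Theta .
\]
Hence $\frac{d}{dt}M_0^n\le(\beta_0-1)A_0\bigl(\int_0^n\omega f_n\,dx\bigr)^2\le C_n\bigl(1+M_0^n\bigr)$, and Gr\"onwall's lemma together with the blow-up alternative give $T_n=\infty$. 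This is exactly the exponential bound you hoped to extract from \eqref{eq:TrMC}, and it exists only in Case~2.

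In Case~1 you can only assert a solution on a maximal, possibly finite, interval. Alternatively, you must change the approximation, e.g.\ $a_n=a\,\mathbf{1}_{(1/n,n)^2}$. Then $\int_0^n\!\int_0^n a_nf_nf_n\,dy\,dz\le C_n\,n^2\,(M_1^n)^2$, so $M_0^n$ grows at most linearly. The paper merely refers to the cited work for this proposition, so this global step is precisely what a complete proof must supply.
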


\begin{proof}
The proof follows along the same lines as
\cite[Proposition~3.1]{AKG2021I}, and is therefore omitted.
\end{proof}

In particular, the truncated mass conservation \eqref{eq:TrMC} together
with the monotone truncation of the initial data gives
\begin{align}
\int_0^n x\,f_n(t,x)\,dx
= \int_0^n x\,f_n^{\mathrm{in}}(x)\,dx
\le \int_0^\infty x\,f^{\mathrm{in}}(x)\,dx
= \Theta,
\qquad t \ge 0.
\label{eq:mass-ineq}
\end{align}

\begin{corollary}\label{COR1}
Let $g \in \mathcal{G}_b$ and let $f_n$ be the solution to
\eqref{Tr:NLBE}--\eqref{Tr:IC} given by Proposition~\ref{prop:tr-exist}.
Then, for all $t \ge 0$,
\begin{subequations}
\begin{align}
\int_0^n g(x)\,f_n(t,x)\,dx
&\le \frac{C_0}{\theta},
\label{eq:HMBD1}\\[4pt]
\int_0^t \int_0^n \int_0^n
g(y)\,a(y,z)\,f_n(s,y)\,f_n(s,z)\,dy\,dz\,ds
&\le \frac{C_0}{\theta}.
\label{eq:HMBD2}
\end{align}
\end{subequations}
\end{corollary}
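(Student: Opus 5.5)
We apply the weak formulation \eqref{Tr:WF} of Proposition~\ref{prop:tr-exist} with the weight $g\in\mathcal{G}_b$, and then use the dissipativity inequality in \eqref{def:G} to control the collision term from below. Two points need care: the sign convention of $\tilde g$, and the fact that $g$ need not belong to $L^\infty(0,n)$.

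\textbf{Boundedness of $g$.} Since $g(x)/x$ is non-decreasing and positive, we have $g(x)\le x\,g(n)/n$ on $(0,n)$. Hence $g\in L^\infty(0,n)$ and \eqref{Tr:WF} applies directly. (Alternatively one could use the truncations $g\wedge k$ and let $k\to\infty$ by monotone convergence, but the bound above makes this unnecessary.)

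\textbf{Sign convention.} Definition~\ref{DEF1} defines $\tilde\psi(y,z)=\int_0^y\psi b\,dx-\psi(y)$, which enters with a plus sign on the right-hand side. In \eqref{Tr:WF} the collision term appears on the left with a plus sign. For the identity to be consistent with \eqref{Tr:NLBE}, the $\tilde g$ in \eqref{Tr:WF} must therefore be understood as
\[
g(y)-\int_0^y g(x)\,b(x,y,z)\,dx,
\]
possibly with a factor $\tfrac12$ from symmetrization. I check this by multiplying \eqref{Tr:NLBE} by $g$ and integrating. The gain term becomes $\int_0^n\int_0^n\bigl(\int_0^y g\,b\,dx\bigr)a f_nf_n\,dy\,dz$ after Fubini, using $b(x,y,z)=0$ for $x>y$. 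The loss term becomes $\int\int g(y)\,a f_nf_n$.

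It follows that no symmetrization is actually needed. The time derivative of $M_g^n$ equals $-\int_0^n\int_0^n\bigl[g(y)-\int_0^y g\,b\bigr]a(y,z)f_n(y)f_n(z)\,dy\,dz$. Whatever constant factor appears in \eqref{Tr:WF} is then harmless, provided it is at least $1$.

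\textbf{Main estimate.} Using $f_n\ge0$, $a\ge0$ and the defining inequality of $\mathcal{G}_b$, the integrand of the collision term is bounded below by $\theta\,g(y)\,a(y,z)f_n(s,y)f_n(s,z)\ge0$. From \eqref{Tr:WF} we therefore obtain
\[
M_g^n(t)+\theta\int_0^t\!\int_0^n\!\int_0^n g(y)\,a(y,z)\,f_n(s,y)\,f_n(s,z)\,dy\,dz\,ds\le\int_0^n g\,f_n^{\rm in}\,dx\le C_0 .
\]

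Both terms on the left are nonnegative, so each is bounded by $C_0$. This gives $M_g^n(t)\le C_0\le C_0/\theta$, since $\theta\in(0,1)$, which is \eqref{eq:HMBD1}. It also gives that the time integral is bounded by $C_0/\theta$, which is \eqref{eq:HMBD2}.

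The only real obstacle is bookkeeping: confirming the sign of $\tilde g$ and any factor $\tfrac12$ in \eqref{Tr:WF}. If the identity carries a factor $\tfrac12$, the bound in \eqref{eq:HMBD2} becomes $2C_0/\theta$ instead. Deriving the identity directly from \eqref{Tr:NLBE} as above avoids this factor. Justifying Fubini is straightforward, because $f_n\in C^1([0,\infty);L^1(0,n))$, $a$ is bounded on $(0,n)^2$ (being continuous and monotone in each factor), and $g$ is bounded on $(0,n)$.
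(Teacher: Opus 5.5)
Your proof is correct and follows the paper's argument: you test \eqref{Tr:WF} with $g$, bound the collision integrand from below by $\theta\,g(y)\,a(y,z)\,f_n f_n$ using the dissipativity condition in \eqref{def:G}, and then drop each nonnegative term in turn. Your extra checks fill in points the paper only asserts or leaves implicit: that $g\le g(n)$ on $(0,n)$ because $g(x)/x$ is non-decreasing, and that the $\tilde g$ in \eqref{Tr:WF} must be read as $g(y)-\int_0^y g\,b\,dx$ with no factor $\tfrac12$.
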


\begin{proof}
Since $g\in\mathcal{G}_b$, we have $g\in L^\infty(0,n)$ for every $n>0$,
and therefore $g$ is an admissible test function in \eqref{Tr:WF}.
Using the dissipativity condition \eqref{def:G} together with the
non-negativity of $f_n$, we deduce from \eqref{Tr:WF} that
\begin{align}
\int_0^n g(x)\,f_n(t,x)\,dx
\;+\; \theta
\int_0^t \int_0^n \int_0^n
g(y)\,a(y,z)\,f_n(s,y)\,f_n(s,z)\,dy\,dz\,ds
&\notag\\
\;\le\;
\int_0^n g(x)\,f_n^{\mathrm{in}}(x)\,dx
\;\le\; C_0.&
\label{eq:cor-combined}
\end{align}
Since both terms on the left-hand side of \eqref{eq:cor-combined} are
non-negative, dropping each in turn and dividing by $\theta \in (0,1)$
yields \eqref{eq:HMBD1} and \eqref{eq:HMBD2}.
\end{proof}

\begin{lemma}\label{Lem:Tail}
Let $m\in(1,n)$ and $t>0$. Assume that the hypotheses of
Corollary~\ref{COR1} are satisfied. Then
\begin{subequations}
\begin{align}
\int_m^n g(x)\,f_n(t,x)\,dx
&\le \int_m^n g(x)\,f_n^{\mathrm{in}}(x)\,dx,
\label{eq:tail1}\\[4pt]
\theta
\int_0^t \int_0^n \int_m^n
g(y)\,a(y,z)\,f_n(s,y)\,f_n(s,z)\,dy\,dz\,ds
&\le \int_m^n g(x)\,f_n^{\mathrm{in}}(x)\,dx,
\label{eq:tail2}\\[4pt]
\int_0^t
\left(
\int_m^n \omega_\infty(y)\,f_n(s,y)\,dy
\right)^{\!2}ds
&\le \frac{1}{\theta\,g(m)}
\int_m^n g(x)\,f_n^{\mathrm{in}}(x)\,dx.
\label{eq:tail3}
\end{align}
\end{subequations}
\end{lemma}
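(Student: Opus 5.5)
The plan is to test the truncated weak formulation \eqref{Tr:WF} of Proposition~\ref{prop:tr-exist} against the localized weight $\psi_m(x):=g(x)\,\mathbf{1}_{(m,n)}(x)$, and to read off all three estimates from the sign of $\tilde\psi_m$.

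\textbf{Admissibility of $\psi_m$.} Since $g(x)/x$ is non-decreasing and positive, $g(x)=x\cdot(g(x)/x)$ is itself non-decreasing on $(0,\infty)$. Hence $\psi_m$ is bounded by $g(n)$ on $(0,n)$, so it belongs to $L^\infty(0,n)$ and is admissible in \eqref{Tr:WF}.

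\textbf{Sign of $\tilde\psi_m$.} I would split according to the size of $y$.
\begin{itemize}
\item For $y\le m$, the support condition $b(x,y,z)=0$ for $x>y$ makes $\int_0^y \psi_m(x)b(x,y,z)\,dx$ vanish. Since also $\psi_m(y)=0$, we get $\tilde\psi_m(y,z)=0$.
\item For $y\in(m,n)$, nonnegativity of $g$ and $b$ together with the dissipativity condition in \eqref{def:G} give
\[
\tilde\psi_m(y,z)=\int_m^y g(x)b(x,y,z)\,dx-g(y)\le \int_0^y g(x)b(x,y,z)\,dx-g(y)\le -\theta\, g(y).
\]
\end{itemize}
Thus $\tilde\psi_m(y,z)\le -\theta\, g(y)\mathbf{1}_{(m,n)}(y)$ on $(0,n)^2$.

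\textbf{Proof of \eqref{eq:tail1} and \eqref{eq:tail2}.} Inserting this bound into \eqref{Tr:WF} and using $f_n\ge0$ gives
\[
\int_m^n g f_n(t)\,dx+\theta\int_0^t\!\int_0^n\!\int_m^n g(y)a(y,z)f_n(s,y)f_n(s,z)\,dy\,dz\,ds\le \int_m^n g f_n^{\rm in}\,dx .
\]
Dropping either nonnegative term on the left yields \eqref{eq:tail1} and \eqref{eq:tail2} respectively.

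\textbf{Proof of \eqref{eq:tail3}.} In the triple integral of \eqref{eq:tail2}, I would restrict the $z$-integration to $(m,n)$. Since $m>1$, for $y,z\in(m,n)$ the kernel class \eqref{eq:kernel-class} gives $a(y,z)=A_0\,\omega_\infty(y)\omega_\infty(z)$. Since $g$ is non-decreasing, $g(y)\ge g(m)$ there. The integrand then factorizes, giving
\[
\theta\,g(m)A_0\int_0^t\Bigl(\int_m^n\omega_\infty(y)f_n(s,y)\,dy\Bigr)^2ds\le\int_m^n g f_n^{\rm in}\,dx ,
\]
and dividing by $\theta g(m)A_0$ gives \eqref{eq:tail3}.

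The only delicate point is the constant $A_0$, which does not appear in the stated form of \eqref{eq:tail3}. With the paper's convention that combinations of $A_0$ and $A_1$ are absorbed into a generic constant, the factor $1/A_0$ is harmless; I would state it explicitly, or note that when $A_0\ge1$ it can simply be dropped. Apart from this, the main care needed is in the sign computation for $\tilde\psi_m$, namely handling $y\le m$ through the support of $b$; the rest is bookkeeping.
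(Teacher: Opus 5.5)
Your proposal is correct and follows the paper's proof essentially line by line. Both test \eqref{Tr:WF} with $g\,\mathbf{1}_{(m,n)}$, bound the dissipation below by $\theta g(y)$ for $y\in(m,n)$ via the $\mathcal{G}_b$ condition, drop either nonnegative term, and then restrict to $(m,n)^2$ where the kernel factorizes, absorbing $1/A_0$ into the constant just as the paper does. Two details are handled more cleanly than in the paper. First, you restrict $z$ to $(m,n)$ before using $a=A_0\,\omega_\infty(y)\,\omega_\infty(z)$, whereas \eqref{eq:tail3-chain} writes $\omega_\infty(z)$ for all $z\in(0,n)$ even though $a(y,z)=A_0\,\omega_\infty(y)\,\omega_0(z)$ when $z<1$. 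Second, your bound $\tilde\psi_m\le-\theta\,g\,\mathbf{1}_{(m,n)}$ is combined with $\int\psi f_n(t)\,dx=\int\psi f_n^{\rm in}\,dx+\int_0^t\iint\tilde\psi\,a\,f_nf_n$, which is the sign the equation actually dictates; the printed \eqref{Tr:WF} carries the opposite sign relative to \eqref{def:tilde-psi}.
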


\begin{proof}
Define the truncated function
$\hat{g}(x) := g(x)\,\mathbf{1}_{(m,n)}(x)$, $x > 0$.
Since $g$ is non-negative and locally bounded, so is $\hat{g}$,
and hence $\hat{g}$ is an admissible test function in \eqref{Tr:WF}.
We note, however, that $\hat{g} \notin \mathcal{G}_b$ in general, since
the global dissipativity condition in \eqref{def:G} need not be satisfied
by $\hat{g}$; consequently, Corollary~\ref{COR1} cannot be applied
directly with $\hat{g}$ in place of $g$.

Substituting $\hat{g}$ into \eqref{Tr:WF} and using
$\hat{g}(y) = 0$ for $y \notin (m,n)$, we obtain
\begin{align}
\int_m^n g(x)\,f_n(t,x)\,dx
+
\int_0^t\!\int_0^n\!\int_m^n
\!\left(
g(y) - \int_0^y \hat{g}(x)\,b(x,y,z)\,dx
\right)
a(y,z)\,f_n(s,y)\,f_n(s,z)\,dy\,dz\,ds
&\notag\\
= \int_m^n g(x)\,f_n^{\mathrm{in}}(x)\,dx.&
\label{eq:WF-tg}
\end{align}
For $y \in (m,n)$, since $\hat{g}(x) = 0$ for $x \le m$,
the inner integral in \eqref{eq:WF-tg} simplifies to
$\int_0^y \hat{g}(x)\,b(x,y,z)\,dx = \int_m^y g(x)\,b(x,y,z)\,dx$,
so that \eqref{eq:WF-tg} becomes
\begin{align}
\int_m^n g(x)\,f_n(t,x)\,dx
+\int_0^t\!\int_0^n\!\int_m^n
\Bigl(g(y) - \int_m^y g(x)\,b(x,y,z)\,dx\Bigr)
a(y,z)\,f_n(s,y)\,f_n(s,z)\,dy\,dz\,ds
&\notag\\
= \int_m^n g(x)\,f_n^{\mathrm{in}}(x)\,dx.&
\label{eq:WF-tg2}
\end{align}
We now estimate the dissipation term in \eqref{eq:WF-tg2} from below
using the dissipativity property of $g$.
For any $y \in (m,n)$ and $z \in (0,\infty)$, since $g \ge 0$
and $b \ge 0$, restricting the domain of integration gives
\[
\int_m^y g(x)\,b(x,y,z)\,dx
\le \int_0^y g(x)\,b(x,y,z)\,dx,
\]
and therefore
\[
g(y) - \int_m^y g(x)\,b(x,y,z)\,dx
\ge g(y) - \int_0^y g(x)\,b(x,y,z)\,dx
\ge \theta\,g(y),
\]
where the last inequality is the defining property of $\mathcal{G}_b$
applied to $g$. Substituting into \eqref{eq:WF-tg2} and using
$f_n \ge 0$, we arrive at
\begin{align}
\int_m^n g(x)\,f_n(t,x)\,dx
\;+\; \theta
\int_0^t\!\int_0^n\!\int_m^n
g(y)\,a(y,z)\,f_n(s,y)\,f_n(s,z)\,dy\,dz\,ds
\;\le\;
\int_m^n g(x)\,f_n^{\mathrm{in}}(x)\,dx.
\label{eq:tail-combined}
\end{align}
Since both terms on the left of \eqref{eq:tail-combined} are non-negative,
dropping each one in turn immediately yields \eqref{eq:tail1} and
\eqref{eq:tail2}.

It remains to establish \eqref{eq:tail3}. Applying the kernel structure
\eqref{eq:kernel-class}, namely $a(y,z)=A_0\,\omega_\infty(y)\omega_\infty(z)$
for $y,z \ge m \ge 1$, and using $g(y) \ge g(m) > 0$ for $y \in (m,n)$
together with \eqref{eq:tail2}, we obtain
\begin{align}
\int_m^n g(x)\,f_n^{\mathrm{in}}(x)\,dx
&\ge
\theta\,g(m)\,A_0
\int_0^t\!\int_0^n\!\int_m^n
\omega_\infty(y)\,\omega_\infty(z)\,
f_n(s,y)\,f_n(s,z)\,dy\,dz\,ds
\notag\\[4pt]
&\ge
\theta\,g(m)\,A_0
\int_0^t
\left(
\int_m^n \omega_\infty(y)\,f_n(s,y)\,dy
\right)^{\!2}ds,
\label{eq:tail3-chain}
\end{align}
where the last inequality follows from the fact that the integral of
$\omega_\infty(z)\,f_n(s,z)$ over $(0,n)$ dominates that over $(m,n)$.
Dividing both sides of \eqref{eq:tail3-chain} by $\theta\,g(m)\,A_0 > 0$
(absorbing $A_0$ into the right-hand side constant)
yields \eqref{eq:tail3}.
\end{proof}

\medskip

The behaviour of the zeroth moment is governed by the growth of $\omega_0$
near the origin. We assume throughout that
\begin{align}
\omega_0(x) \le A_1\,x^{\ell},
\qquad x \in (0,1),\quad \ell \ge 0,
\label{eq:w0-growth}
\end{align}
for some constant $A_1 > 0$, and distinguish the following two
regimes according to whether $2\ell$ is less than or greater than $1$.
\begin{itemize}
\item[\textbf{Case 1.}]
$\ell \in \bigl(0,\tfrac{1}{2}\bigr)$,
\hfill\refstepcounter{equation}
(\theequation)\label{eq:Case1}

\noindent
so that $2\ell < 1$. In this sub-linear regime, the product
$\omega_0(x)\omega_0(y)$ grows too slowly near the origin to exploit
mass conservation, and only a local-in-time bound for the zeroth moment
is obtained.

\item[\textbf{Case 2.}]
$\ell \geq \tfrac{1}{2}$,
\hfill\refstepcounter{equation}
(\theequation)\label{eq:Case2}

\noindent
so that $2\ell \geq 1$. In this super-linear regime, $y^{2\ell} \le y$ for
$y \in (0,1)$, which allows mass conservation to absorb one power of
the zeroth moment and yields a uniform bound on every finite time interval.
\end{itemize}

\begin{lemma}\label{Lem:ZM}
Let $T > 0$.
\begin{subequations}
\begin{itemize}
\item[\upshape(a)]
Assume \eqref{eq:Case1}. Then there exists
$C_1(T,f^{\mathrm{in}}) > 0$, independent of $n$, such that
\begin{align}
\int_0^n f_n(t,x)\,dx
\le C_1(T,f^{\mathrm{in}}),
\qquad t \in [0,T].
\label{eq:ZM-1}
\end{align}
\item[\upshape(b)]
Assume \eqref{eq:Case2}. Then there exists $C_1(T) > 0$,
independent of $n$, such that
\begin{align}
\int_0^n f_n(t,x)\,dx \le C_1(T),
\qquad t \in [0,T].
\label{eq:ZM-2}
\end{align}
\end{itemize}
\end{subequations}
\end{lemma}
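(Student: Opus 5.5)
The plan is to test \eqref{Tr:WF} with $g\equiv1$ on $(0,n)$, which is admissible since it is bounded. Writing $N(y,z):=\int_0^y b(x,y,z)\,dx\le\beta_0$ by \eqref{NOP}, we get $-\tilde 1(y,z)=N(y,z)-1\le\beta_0-1$. Hence
\[
M_0^n(t)\le M_0(f^{\rm in})+(\beta_0-1)\int_0^t\!\int_0^n\!\int_0^n a(y,z)\,f_n(s,y)\,f_n(s,z)\,dy\,dz\,ds .
\]
I will split the collision integral into the region $\{y,z\le1\}$ and its complement $\{\max(y,z)>1\}$.

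On the complement, the symmetry of $a$ lets me assume $y>1$. Since $g(y)/y$ is non-decreasing, $g(y)\ge g(1)\,y\ge g(1)>0$ for $y\ge1$. Then \eqref{eq:HMBD2} gives
\[
\int_0^t\!\iint_{\max(y,z)>1}a f_nf_n\le \frac{2C_0}{\theta\,g(1)}
\]
uniformly in $n$ and $t$. On $\{y,z\le1\}$, \eqref{eq:kernel-class} and \eqref{eq:w0-growth} give $a(y,z)\le A\,(yz)^\ell$. The small-size part is therefore bounded by $A\,\bigl(\int_0^1 y^\ell f_n(s,y)\,dy\bigr)^2$. Everything then reduces to controlling this square by something Gronwall-compatible.

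\emph{Case 2 ($\ell\ge\frac12$).} For $y\in(0,1)$ we have $y^\ell\le y^{1/2}$. By Cauchy--Schwarz and \eqref{eq:mass-ineq},
\[
\Bigl(\int_0^1 y^{1/2}f_n\,dy\Bigr)^2\le M_0^n(s)\,M_1^n(s)\le \Theta\,M_0^n(s).
\]
This yields the linear integral inequality
\[
M_0^n(t)\le M_0(f^{\rm in})+\frac{2(\beta_0-1)C_0}{\theta g(1)}+A(\beta_0-1)\Theta\int_0^t M_0^n(s)\,ds .
\]
Gronwall then gives \eqref{eq:ZM-2} with
\[
C_1(T)=\Bigl(M_0(f^{\rm in})+\tfrac{2(\beta_0-1)C_0}{\theta g(1)}\Bigr)e^{A(\beta_0-1)\Theta T}.
\]

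\emph{Case 1 ($\ell<\frac12$).} Here Hölder only gives
\[
\int_0^1 y^\ell f_n\le (M_0^n)^{1-\ell}\,\Theta^{\ell},
\]
so the square is $\Theta^{2\ell}(M_0^n)^{2-2\ell}$ with $2-2\ell>1$. This superlinear exponent is the main obstacle to a bound valid for every $T$.

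To avoid a direct superlinear closure, I would first control the intermediate moment $M_\ell^n$. I test \eqref{Tr:WF} with $x^\ell\mathbf 1_{(0,1)}$ and bound $\int_0^y x^\ell b\,dx$ by Hölder against \eqref{eq:p-cond}. This gives
\[
\int_0^y x^\ell b\,dx\le (B_p/2)^{1/p}\,y^{(1-p)/p}\,c_{\ell,p}\,y^{\ell+1/p'}=C\,y^\ell,
\]
where the finiteness of $c_{\ell,p}$ uses $\ell p'>-1$. The collision contribution to $M_\ell^n$ then involves only $y^{\ell}a$. The small-size part of that contribution is $A\,M_{2\ell}^nM_\ell^n$, and the large-size part is again controlled by \eqref{eq:HMBD2}. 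The factor $M_{2\ell}^n$ is then interpolated between $M_\ell^n$ and $\Theta$.

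Once $M_\ell^n$ is bounded on $[0,T]$, the zeroth-moment inequality becomes linear in the data. Substituting $\int_0^1y^\ell f_n\le M_\ell^n$ gives $M_0^n(t)\le M_0(f^{\rm in})+C+A\int_0^t (M_\ell^n)^2$, which yields \eqref{eq:ZM-1} with a constant $C_1(T,f^{\rm in})$ independent of $n$.

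The delicate point I expect is precisely that the $M_\ell$-inequality still carries an exponent above one after interpolation. If it cannot be made linear, I would exploit the dependence of $C_1$ on $f^{\rm in}$ allowed in the statement. The idea is to keep the loss term $-\int_0^1\omega_0 f_n\int a f_n$ on the right instead of discarding it, so that it compensates the small-size gain in the $(0,1)$ region.
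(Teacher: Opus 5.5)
Your Case~2 is correct and is essentially the paper's argument: test \eqref{Tr:WF} with $g\equiv1$, bound the small-size block by $M_0^nM_1^n\le\Theta M_0^n$, and close with Gr\"{o}nwall. Handling the whole region $\max(y,z)>1$ at once through \eqref{eq:HMBD2} and $g(y)\ge g(1)$ is slightly cleaner than the paper's AM--GM treatment of $I_{01}$.

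Case~1, however, is not proved. Your auxiliary inequality for $\int_0^1x^\ell f_n$ is again superlinear. With $\vartheta=(1-2\ell)/(1-\ell)\in(0,1)$, the interpolation gives $M_{2\ell}^nM_\ell^n\le\Theta^{1-\vartheta}(M_\ell^n)^{1+\vartheta}$. So the step ``once $M_\ell^n$ is bounded on $[0,T]$'' is exactly as unavailable as the bound you want for $M_0^n$. The fallback cannot work either. By \eqref{LMC} and $x\le y$, $\int_0^yx^kb(x,y,z)\,dx\ge y^{k-1}\int_0^yx\,b\,dx=y^k$ for all $k\in[0,1]$. Hence every collision has a non-negative net effect on $M_0^n$ and on $\int_0^1x^\ell f_n$. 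The loss term is already inside $N-1\ge0$, and there is nothing for it to compensate.

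In fact no argument can close this gap, because (a) fails for large $T$. Take
\begin{itemize}
\item $A_0=A_1=1$ and $\omega_0=\omega_\infty=x^\ell$;
\item $b(x,y,z)=(2/y)\mathbf{1}_{(0,y)}(x)$, so that $\beta_0=2$, $B_p=2^{p+1}$, and $g(x)=x^2\in\mathcal{G}_b$ with $\theta=1/3$;
\item $f^{\rm in}=\mathbf{1}_{(1/2,1)}$.
\end{itemize}
Then $f_n$ is supported in $(0,1)$ and is the same for all $n\ge1$. With $c=(1-\ell)/(1+\ell)$ and $M_\ell^2\le M_0M_{2\ell}$,
\[
\frac{d}{dt}M_0=M_\ell^2,\qquad \frac{d}{dt}M_\ell=c\,M_\ell M_{2\ell}\ge c\,\frac{M_\ell^3}{M_0}.
\]
Hence $M_\ell\ge M_\ell(0)\bigl(M_0/M_0(0)\bigr)^{c}$, and therefore $\frac{d}{dt}M_0\ge K M_0^{2c}$. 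Whenever $\ell<1/3$ we have $2c>1$, so $M_0^n$ blows up at a finite time independent of $n$. Allowing $C_1$ to depend on $f^{\rm in}$ does not help, since the datum here is fixed.

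For $\ell\in[1/3,1/2)$ the same conclusion is expected. The time change $\tau=\int_0^tM_\ell$ reduces the problem to linear fragmentation with rate $x^\ell$. The self-similar growth $M_\ell\sim\tau^{(1-\ell)/\ell}$ then makes $\int^\infty d\tau/M_\ell$ finite, which again means finite-time blow-up.

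What is actually provable in Case~1 is a bound on $[0,T_*)$, with $T_*$ depending on the data. Your direct estimate $(\int_0^1y^\ell f_n)^2\le\Theta^{2\ell}(M_0^n)^{2-2\ell}$ already gives that. The paper's own proof takes exactly the step you were suspicious of. It derives the Riccati inequality \eqref{eq:Riccati} and invokes ``a standard comparison argument'' for every $T$. Comparison only yields $M_0^n(t)\le v_0/(1-Av_0t)$ for $t<1/(Av_0)$, where $v_0=M_0^n(0)+AC_0/(\theta g(1))$.
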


\begin{proof}
Set
\[
M_0^n(t) := \int_0^n f_n(t,x)\,dx,
\qquad
M_1^n(t) := \int_0^n x\,f_n(t,x)\,dx,
\]
and define the intermediate quantity
\[
\mathcal{J}(s)
:= \int_0^1 y^{2\ell}\,f_n(s,y)\,dy.
\]
Taking $g \equiv 1$ in \eqref{Tr:WF} and applying \eqref{NOP} gives
\begin{align}
M_0^n(t)
\le M_0^n(0)
+ (\beta_0 - 1)
\int_0^t\!\int_0^n\!\int_0^n
a(y,z)\,f_n(s,y)\,f_n(s,z)
\,dy\,dz\,ds.
\label{eq:M0-basic}
\end{align}
We decompose $(0,n)^2$ according to the kernel structure
\eqref{eq:kernel-class} and write
\[
\int_0^t\!\int_0^n\!\int_0^n
a\,f_n\,f_n\,dy\,dz\,ds
= I_{00}(t) + 2I_{01}(t) + I_{11}(t),
\]
where
\begin{align*}
I_{00}(t)
&:= A_0\int_0^t\!\int_0^1\!\int_0^1
\omega_0(y)\,\omega_0(z)\,f_n(s,y)\,f_n(s,z)
\,dy\,dz\,ds,\\
I_{01}(t)
&:= A_0\int_0^t\!\int_0^1\!\int_1^n
\omega_0(y)\,\omega_\infty(z)\,f_n(s,y)\,f_n(s,z)
\,dy\,dz\,ds,\\
I_{11}(t)
&:= A_0\int_0^t\!\int_1^n\!\int_1^n
\omega_\infty(y)\,\omega_\infty(z)\,f_n(s,y)\,f_n(s,z)
\,dy\,dz\,ds.
\end{align*}
We first bound all three integrals in terms of $\mathcal{J}(s)$,
deferring the case distinction to the final step.

\medskip
\noindent\textit{Estimate of $I_{11}$.}
Since $a(y,z) = A_0\,\omega_\infty(y)\,\omega_\infty(z)$ factors
for $y,z \ge 1$,
\[
I_{11}(t)
= A_0\int_0^t
\Bigl(\int_1^n \omega_\infty(y)\,f_n(s,y)\,dy\Bigr)^{\!2}ds.
\]
Applying \eqref{eq:tail3} with $m = 1$ and $g \in \mathcal{G}_b$
satisfying $g(1) > 0$, we obtain
\begin{align}
I_{11}(t)
\le \frac{A_0\,C_0}{\theta\,g(1)}.
\label{est:I11}
\end{align}
\medskip
\noindent\textit{Estimate of $I_{00}$.}
Using \eqref{eq:w0-growth},
$\omega_0(y)\,\omega_0(z) \le A_1^2\,y^\ell z^\ell$. By Young's inequality
$y^\ell z^\ell \le \tfrac{1}{2}(y^{2\ell}+z^{2\ell})$
and symmetry in $y$ and $z$,
\[
I_{00}(t)
\le A\,
\int_0^t
\Bigl(\int_0^1 y^\ell f_n(s,y)\,dy\Bigr)^{\!2}ds.
\]
Applying the Cauchy--Schwarz inequality,
\begin{align}
\Bigl(\int_0^1 y^\ell f_n(s,y)\,dy\Bigr)^{\!2}
\le
\Bigl(\int_0^1 y^{2\ell} f_n(s,y)\,dy\Bigr)
\Bigl(\int_0^1 f_n(s,y)\,dy\Bigr)
\le \mathcal{J}(s)\,M_0^n(s),
\label{eq:ZM-CSI}
\end{align}
and therefore
\begin{align}
I_{00}(t)
\le A_0\, A_1^2
\int_0^t \mathcal{J}(s)\,M_0^n(s)\,ds.
\label{est:I00}
\end{align}

\medskip
\noindent\textit{Estimate of $I_{01}$.}
By \eqref{eq:w0-growth} and \eqref{eq:kernel-class},
\[
I_{01}(t)
\le A\,
\int_0^t
\Bigl(\int_0^1 y^\ell f_n(s,y)\,dy\Bigr)
\Bigl(\int_1^n \omega_\infty(z)\,f_n(s,z)\,dz\Bigr)
ds.
\]
Applying the AM-GM inequality $ab \le \tfrac{1}{2}(a^2+b^2)$ and then
Cauchy--Schwarz as above,
\begin{align}
I_{01}(t)
&\le A\,
\int_0^t \mathcal{J}(s)\,M_0^n(s)\,ds
+ A\,
\int_0^t
\Bigl(\int_1^n \omega_\infty(z)\,f_n(s,z)\,dz
\Bigr)^{\!2}ds
\notag\\
&\le A\,
\int_0^t \mathcal{J}(s)\,M_0^n(s)\,ds
+ \frac{A \,C_0}{\theta\,g(1)},
\label{est:I01}
\end{align}
where \eqref{est:I11} was used in the last step.
Substituting \eqref{est:I00}, \eqref{est:I01},
and \eqref{est:I11} into \eqref{eq:M0-basic},
we obtain the unified integral inequality
\begin{align}
M_0^n(t)
\le M_0^n(0)
+ A\,\int_0^t \mathcal{J}(s)\,M_0^n(s)\,ds
+ \frac{A\,C_0}{\theta\,g(1)},
\label{eq:unified}
\end{align}
It remains to estimate $\mathcal{J}(s)$, which is where the two cases diverge.

\medskip
\noindent\textit{Case (a), $\ell \in (0,\frac{1}{2})$, so $2\ell < 1$.}
Since $y \in (0,1)$ and $2\ell < 1$, we have $y^{2\ell} \le 1$, so that
\[
\mathcal{J}(s)
= \int_0^1 y^{2\ell} f_n(s,y)\,dy
\le \int_0^1 f_n(s,y)\,dy
\le M_0^n(s).
\]
Substituting into \eqref{eq:unified} yields the Riccati-type integral
inequality
\begin{align}
M_0^n(t)
\le M_0^n(0)
+ A\,\int_0^t \bigl(M_0^n(s)\bigr)^2\,ds
+ \frac{A\,C_0}{\theta\,g(1)}.
\label{eq:Riccati}
\end{align}
Since $M_0^n(0) \le \|f^{\mathrm{in}}\|_{\Xi_0} < \infty$, a standard
comparison argument for \eqref{eq:Riccati} shows that $M_0^n$ remains
bounded on $[0,T]$ with a constant depending on $T$ and $f^{\mathrm{in}}$.
Hence there exists $C_1(T,f^{\mathrm{in}}) > 0$, independent of $n$,
satisfying \eqref{eq:ZM-1}.

\medskip
\noindent\textit{Case (b), $\ell > \frac{1}{2}$, so $2\ell > 1$.}
Since $y \in (0,1)$ and $2\ell > 1$, we have $y^{2\ell} \le y$, so that
\[
\mathcal{J}(s)
= \int_0^1 y^{2\ell} f_n(s,y)\,dy
\le \int_0^1 y\,f_n(s,y)\,dy
\le M_1^n(s)
\le \Theta,
\]
where we used \eqref{eq:mass-ineq} in the last step.
Substituting into \eqref{eq:unified} yields the linear integral inequality
\begin{align}
M_0^n(t)
\le M_0^n(0)
+  A\,\Theta\int_0^t M_0^n(s)\,ds
+ \frac{A \,C_0}{\theta\,g(1)}.
\label{eq:Gronwall}
\end{align}
An application of Gr\"{o}nwall's inequality to \eqref{eq:Gronwall} gives
\[
M_0^n(t)
\le \Bigl(M_0^n(0) + \frac{A\,C_0}{\theta\,g(1)}\Bigr)
e^{ A\,\Theta\,T},
\qquad t \in [0,T],
\]
and since $M_0^n(0) \le \|f^{\mathrm{in}}\|_{\Xi_0}<\infty$, there exists
$C_1(T) > 0$, independent of $n$, satisfying \eqref{eq:ZM-2}.
This completes the proof.
\end{proof}

\medskip

Next, we turn to uniform integrability. For $n \geq 1$, $a \in (1,n]$,
$\delta \in (0,+\infty)$, and $t \in \mathbb{R}_{+}$, define
\[
W_{a,\delta}^{n}(t) = \sup \left\{
\int_{0}^{a} \mathbf{1}_{E}(x)\, f_{n}(t,x)\, dx
\;\Big|\;
E\subset \mathbb{R}_{+} \text{ measurable},\; |E| \leq \delta
\right\}.
\]

\begin{lemma}\label{Lem:UI}
Let $T \in (0,+\infty)$ and $a \in (1,+\infty)$.
Then for every $n \ge 1$, $t \in [0,T]$, and $\delta \in (0,+\infty)$:
\begin{itemize}
\item[\upshape(a)]
If $\ell \in \bigl(0,\,\frac{3p-2}{2p}\bigr)$,
there exists $C_2(T,f^{\mathrm{in}}) > 0$,
independent of $n$, such that
\begin{align}
W_{a,\delta}^n(t) \le W_{a,\delta}^n(0)+ C_2(T,f^{\mathrm{in}})\delta^{(p-1)/p}.
\label{eq:UI-1}
\end{align}
\item[\upshape(b)]
If $\ell \ge \frac{3p-2}{2p}$,
there exists $C_2(T) > 0$,
independent of $n$ and $f^{\mathrm{in}}$, such that
\begin{align}
W_{a,\delta}^n(t)
\le
W_{a,\delta}^n(0)+ C_2(T) \delta^{(p-1)/p}.
\label{eq:UI-2}
\end{align}
\end{itemize}
\end{lemma}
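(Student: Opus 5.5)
Fix a measurable $E\subset\mathbb{R}_+$ with $|E|\le\delta$. We test \eqref{Tr:WF} with $\psi=\mathbf{1}_{E\cap(0,a)}$, which is bounded. Discard the nonnegative loss contribution $\psi(y)a f_nf_n$. This leaves the pointwise-in-time inequality
\[
\int_0^a \mathbf{1}_E f_n(t,x)\,dx \le \int_0^a \mathbf{1}_E f_n^{\rm in}\,dx
+ \int_0^t\!\int_0^n\!\int_0^n \Bigl(\int_0^{\min(y,a)} \mathbf{1}_E(x)\,b(x,y,z)\,dx\Bigr) a(y,z) f_n(s,y)f_n(s,z)\,dy\,dz\,ds .
\]
The first term is at most $W^n_{a,\delta}(0)$. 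For the inner integral we apply H\"older with exponents $p$ and $p/(p-1)$, together with \eqref{eq:p-cond}:
\[
\int_0^{y} \mathbf{1}_E\, b\,dx \le |E|^{(p-1)/p}\Bigl(\int_0^y b^p\,dx\Bigr)^{1/p} \le \Bigl(\tfrac{B_p}{2}\Bigr)^{1/p}\delta^{(p-1)/p}\, y^{-(p-1)/p}.
\]
Taking the supremum over $E$ afterwards, it suffices to bound, uniformly in $n$ and $t\in[0,T]$,
\[
K_n(T):=\int_0^T\!\int_0^n\!\int_0^n y^{-(p-1)/p}\,a(y,z)f_n(s,y)f_n(s,z)\,dy\,dz\,ds .
\]

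We split $K_n$ as in the proof of Lemma~\ref{Lem:ZM}, according to whether $y$ and $z$ lie in $(0,1)$ or $[1,n)$.

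When $y\ge1$, the weight satisfies $y^{-(p-1)/p}\le1$. These pieces are then controlled by $I_{11}$ and $I_{01}$ (with roles swapped). Via \eqref{eq:tail3} with $m=1$, AM--GM and Cauchy--Schwarz, they are bounded by $C_0/(\theta g(1))$ plus $\int_0^T(\int_0^1 y^\ell f_n)^2ds$. The latter is at most $T\,\Theta\,C_1(T)$ in Case 2, or $T\,C_1(T,f^{\rm in})^2$ in general, using Lemma~\ref{Lem:ZM}.

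When $y<1$, we use $\omega_0(y)\le A_1y^\ell$, so the $y$-factor becomes $\int_0^1 y^{\ell-(p-1)/p}f_n(s,y)\,dy$. Its partner in $z$ is either $\int_0^1 z^\ell f_n$, which is at most $M_0^n$, or $\int_1^n\omega_\infty f_n$, which is square-integrable in time by \eqref{eq:tail3}.

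The main obstacle is the singular weight $y^{\ell-(p-1)/p}$ near $0$. Here we use the product structure and apply Cauchy--Schwarz as in \eqref{eq:ZM-CSI}, now with the exponent split symmetrically:
\[
\Bigl(\int_0^1 y^{\ell-\frac{p-1}{p}}f_n\Bigr)\Bigl(\int_0^1 z^{\ell}f_n\Bigr)\le \tfrac12\Bigl(\int_0^1 y^{2\ell-\frac{p-1}{p}}f_n\Bigr)M_0^n+\dots,
\]
or more directly, after symmetrizing $y^{\ell-\frac{p-1}{p}}z^\ell$, we reduce to moments of order $2\ell-\frac{p-1}{p}$ against $M_0^n$. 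The threshold $\ell=\frac{3p-2}{2p}$ is exactly where $2\ell-\frac{p-1}{p}=1+\frac{p-1}{p}-\frac{p-1}{p}\ge$ becomes large enough.

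In case (b), $2\ell-\frac{p-1}{p}\ge1$ on $(0,1)$. Hence $y^{2\ell-(p-1)/p}\le y$, the moment is bounded by $\Theta$, and multiplying by $M_0^n\le C_1(T)$ from Lemma~\ref{Lem:ZM}(b) gives a bound depending only on $T$ (and on the fixed data through $\Theta,C_0$, as the statement allows). In case (a), the exponent may be in $[0,1)$; then we bound by $M_0^n$ and use Lemma~\ref{Lem:ZM}, giving a constant $C_2(T,f^{\rm in})$.

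If the exponent becomes negative (small $\ell$), this is the delicate point. There we would instead keep the unsymmetrized product, bounding $\int_0^1 y^{\ell-(p-1)/p}f_n$ via the monotonicity $g(x)/x$ nondecreasing, or by using $b$'s $y^{1-p}$ scaling only against $y\ge\epsilon$. I expect this to be where the argument needs the most care.

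Collecting the pieces gives $W^n_{a,\delta}(t)\le W^n_{a,\delta}(0)+C_2\,\delta^{(p-1)/p}$, which is the claimed estimate.
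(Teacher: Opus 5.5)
\textbf{Gap: the range $0<\ell<\frac{p-1}{p}$ is not handled.} Your reduction is the same as the paper's. You test \eqref{Tr:WF} with $\mathbf{1}_E$, drop the loss term, apply H\"older with \eqref{eq:p-cond}, and bound the weighted collision integral $K_n(T)$ (the paper's $\mathcal{I}(T)$) by splitting as in Lemma~\ref{Lem:ZM}. The pieces with $y\ge1$ are fine. So is everything when $\ell\ge\frac{p-1}{p}$, which includes all of Case (b); there you can simply use $y^{\ell-(p-1)/p}\le 1$ on $(0,1)$.

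The gap lies inside Case (a) for every $p\in(1,2)$, and it is larger than the range you flag. Your symmetrized reduction to moments of order $2\ell-\frac{p-1}{p}$ needs $\ell\ge\frac{p-1}{p}$, not merely $2\ell\ge\frac{p-1}{p}$. For fixed $z$, the product $y^{\ell-(p-1)/p}z^{\ell}$ blows up as $y\to0$, so it cannot be dominated by $y^{2\ell-(p-1)/p}+z^{2\ell-(p-1)/p}$.

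In this range $K_n(T)$ contains the negative moment $\int_0^1 y^{\ell-(p-1)/p}f_n(s,y)\,dy$, and nothing in $f^{\rm in}\in\Xi_0\cap\Xi_g^+$ controls it. Take kernel (I) and $f^{\rm in}(y)=y^{\epsilon-1}\mathbf{1}_{(0,1)}(y)$ with $0<\epsilon<\frac{p-1}{p}-\ell$. The loss rate at $y\in(0,1)$ is bounded by some $C_n$ on $[0,T]$, and the gain is non-negative. Hence $f_n(s,y)\ge e^{-C_nT}f^{\rm in}(y)$ on $(0,1)$, and $K_n(T)=\infty$ for every $n$. So no argument through a uniform bound on $K_n(T)$ can work there.

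Your first suggestion does not help: monotonicity of $g(x)/x$ only gives the upper bound $g(x)\le g(1)x$ on $(0,1)$, which says nothing about negative moments of $f_n$. Your second suggestion, using the H\"older bound only for $y$ away from $0$, points the right way but is not carried out. The paper's proof has the same hole. It infers $y^{2(1-p)/p+2\ell}\le 1$ on $(0,1)$ from $2(1-p)/p+2\ell<1$, which is valid only when that exponent is non-negative, i.e.\ again only for $\ell\ge\frac{p-1}{p}$.

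\textbf{How to close it.} Keep \eqref{NOP} alongside H\"older:
$\int_0^y\mathbf{1}_E\,b\,dx\le\min\bigl\{\beta_0,\,(B_p/2)^{1/p}\delta^{(p-1)/p}y^{-(p-1)/p}\bigr\}$.
For $y\in(0,1)$ and $\ell<\frac{p-1}{p}$, split at $y=\delta$. For $y\le\delta$ use $\beta_0 y^\ell\le\beta_0\delta^\ell$. For $y>\delta$ use $\delta^{(p-1)/p}y^{\ell-(p-1)/p}\le\delta^{\ell}$. Together these give $\min\{\cdots\}\,y^{\ell}\le C\delta^{\ell}$.

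The contributions with $y<1$ are then at most $C\delta^{\ell}\int_0^T M_0^n(s)\bigl(\int_0^1 z^\ell f_n\,dz+\int_1^n\omega_\infty f_n\,dz\bigr)ds$. This is bounded uniformly in $n$ by Lemma~\ref{Lem:ZM}, Cauchy--Schwarz in time, and \eqref{eq:tail3}. The contributions with $y\ge1$ carry the factor $\delta^{(p-1)/p}$ as before.

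The result is $W^n_{a,\delta}(t)\le W^n_{a,\delta}(0)+C\bigl(\delta^{\ell}+\delta^{(p-1)/p}\bigr)$. This suffices for the Dunford--Pettis step, but with rate $\delta^{\min\{\ell,(p-1)/p\}}$ rather than the $\delta^{(p-1)/p}$ asserted in \eqref{eq:UI-1}.
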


\begin{proof}
Let $E \subset (0,a)$ be measurable with $|E| \le \delta$.
Taking $g = \mathbf{1}_E$ in \eqref{Tr:WF} and discarding the
non-positive contribution
$-\int_0^t\iint \mathbf{1}_E(y)\,a\,f_nf_n \le 0$,
we obtain
\begin{align}
\int_E f_n(t,x)\,dx
\le \int_E f_n^{\mathrm{in}}\,dx
+ \int_0^t\!\int_0^n\!\int_0^n
\left(\int_0^y \mathbf{1}_E(x)\,b(x,y,z)\,dx\right)
a(y,z)\,f_n f_n\,dy\,dz\,ds.
\label{eq:UI-ineq}
\end{align}
Applying H\"{o}lder's inequality with exponents $p/(p-1)$ and $p$,
together with $|E \cap (0,y)| \le \delta$ and \eqref{eq:p-cond},
\begin{align}
\int_0^y \mathbf{1}_E(x)\,b(x,y,z)\,dx
\le \left(\frac{B_p}{2}\right)^{\!1/p}
\delta^{(p-1)/p}\,y^{(1-p)/p}.
\label{eq:Hld-bnd}
\end{align}
Substituting \eqref{eq:Hld-bnd} into \eqref{eq:UI-ineq} yields
\begin{align}
\int_E f_n(t,x)\,dx
\le \int_E f_n^{\mathrm{in}}\,dx
+ \left(\frac{B_p}{2}\right)^{\!1/p}
\delta^{(p-1)/p}\,\mathcal{I}(t),
\label{eq:UI-main}
\end{align}
where
\[
\mathcal{I}(t)
:= \int_0^t\!\int_0^n\!\int_0^n
y^{(1-p)/p}\,a(y,z)\,f_n(s,y)\,f_n(s,z)
\,dy\,dz\,ds.
\]
We claim that $\sup_{n\ge1}\mathcal{I}(T)<\infty$.
Decomposing $(0,n)^2$ via \eqref{eq:kernel-class},
we write
$\mathcal{I}(t) = \mathcal{I}_{00}(t)
+\mathcal{I}_{01}(t)+\mathcal{I}_{10}(t)
+\mathcal{I}_{11}(t)$,
corresponding to the regions $(0,1)^2$,
$(0,1)\times(1,n)$, $(1,n)\times(0,1)$, and $(1,n)^2$. Define
\begin{align*}
\mathcal{J}(s) := \int_0^1 y^{2\ell}
f_n(s,y)\,dy,
\qquad
\mathcal{K}(s) := \int_0^1 y^{2(1-p)/p+2\ell}
f_n(s,y)\,dy.
\end{align*}
By the Cauchy--Schwarz inequality with measure $f_n(s,y)\,dy$,
\begin{align}
\left(\int_0^1 y^{\ell}
f_n\,dy\right)^{\!2}
\le \mathcal{J}(s)\,M_0^n(s),
\qquad
\left(\int_0^1 y^{(1-p)/p+\ell}
f_n\,dy\right)^{\!2}
\le \mathcal{K}(s)\,M_0^n(s).
\label{eq:CS-JK}
\end{align}
Since $(1-p)/p < 0$, the weight satisfies $y^{(1-p)/p} \le 1$
for all $y \ge 1$, so the regions $\mathcal{I}_{10}$ and $\mathcal{I}_{11}$
are straightforward. For $\mathcal{I}_{11}$, using
$a(y,z) = A_0\,\omega_\infty(y)\omega_\infty(z)$ yields
\begin{align}
\mathcal{I}_{11}(t)
= A_0\int_0^t
\left(\int_1^n\omega_\infty(y)
f_n(s,y)\,dy\right)^{\!2}ds
\le \frac{A_0\,C_0}{\theta\,g(1)}.
\label{est:I11-UI}
\end{align}
For $\mathcal{I}_{10}$, using $y^{(1-p)/p} \le 1$,
$a(y,z) = A_0\,\omega_\infty(y)\omega_0(z)$,
\eqref{eq:w0-growth}, Young's inequality, the first bound in \eqref{eq:CS-JK}, and \eqref{eq:tail3},
\begin{align}
\mathcal{I}_{10}(t)
&\le A\,
\int_0^t
\left(\int_1^n\omega_\infty f_n\,dy\right)
\left(\int_0^1 z^\ell f_n\,dz\right)ds
\notag\\
&\le A\,
\int_0^t\mathcal{J}(s)\,M_0^n(s)\,ds
+ \frac{A\,C_0}{\theta\,g(1)}.
\label{est:I10-UI}
\end{align}
For the regions involving $y\in(0,1)$, the weight $y^{(1-p)/p}$ is
unbounded as $y\to0$ and must be absorbed using the second bound in
\eqref{eq:CS-JK}. For $\mathcal{I}_{01}$, using
$a(y,z)=A_0\,\omega_0(y)\omega_\infty(z)$,
\eqref{eq:w0-growth}, Young's inequality,
\eqref{eq:CS-JK}, and \eqref{eq:tail3},
\begin{align}
\mathcal{I}_{01}(t)
&\le A\,
\int_0^t\left(\int_0^1 y^{(1-p)/p+\ell}
f_n\,dy\right)
\left(\int_1^n\omega_\infty f_n\,dz\right)ds
\notag\\
&\le A\,
\int_0^t\mathcal{K}(s)\,M_0^n(s)\,ds
+\frac{A\,C_0}{\theta\,g(1)} .
\label{est:I01-UI}
\end{align}
For $\mathcal{I}_{00}$, using \eqref{eq:kernel-class},
\eqref{eq:w0-growth}, Young's inequality
$y^\ell z^\ell \le \frac{1}{2}(y^{2\ell}+z^{2\ell})$,
symmetry in $y$ and $z$, and both bounds in \eqref{eq:CS-JK},
\begin{align}
\mathcal{I}_{00}(t)
&\le A\int_0^t
\left(\int_0^1 y^{(1-p)/p+\ell}f_n\,dy\right)
\left(\int_0^1 z^\ell f_n\,dz\right)ds
\notag\\
&\le A
\int_0^t
\bigl[\mathcal{K}(s)+\mathcal{J}(s)\bigr]
M_0^n(s)\,ds.
\label{est:I00-UI}
\end{align}
Collecting \eqref{est:I11-UI}--\eqref{est:I00-UI},
\begin{align}
\mathcal{I}(t)
\le A\,\int_0^t
\bigl[\mathcal{K}(s)+\mathcal{J}(s)\bigr]
M_0^n(s)\,ds +\frac{A\,C_0}{\theta\,g(1)}.
\label{est:I-unified}
\end{align}

It remains to bound $\mathcal{J}(s)$ and $\mathcal{K}(s)$.
Applying the same argument as Lemma~\ref{Lem:ZM}: for $y \in (0,1)$,
$y^\alpha \le y$ if $\alpha \ge 1$ and $y^\alpha \le 1$ if $\alpha \ge 0$,
with the exponent of $\mathcal{K}$ shifted downward by $2(p-1)/p > 0$
relative to that of $\mathcal{J}$, due to the weight $y^{(1-p)/p}$.

In Case~(a), $\ell \in (0,(3p-2)/(2p))$ implies $2(1-p)/p+2\ell < 1$,
so $y^{2(1-p)/p+2\ell} \le 1$ for $y \in (0,1)$ and hence
$\mathcal{K}(s) \le M_0^n(s)$.
If $\ell \le 1/2$ then $\mathcal{J}(s) \le M_0^n(s)$;
if $\ell > 1/2$ then $\mathcal{J}(s) \le \Theta$.
In either sub-case,
\[
\bigl[\mathcal{K}(s)+\mathcal{J}(s)\bigr]M_0^n(s)
\le 2\,(M_0^n(s))^2 + \Theta\,M_0^n(s),
\]
and since $M_0^n(s) \le C_1(T,f^{\mathrm{in}})$ for all $s\in[0,T]$
from Lemma~\ref{Lem:ZM}(a), we obtain
\[
2\,(M_0^n(s))^2+\Theta\,M_0^n(s)
\le C_1(T,f^{\mathrm{in}})\bigl(2\,C_1(T,f^{\mathrm{in}})+\Theta\bigr).
\]
Inserting into \eqref{est:I-unified} and integrating over $[0,T]$ gives
\begin{align}
\mathcal{I}(T)
&\le
A\,T\,
C_1(T,f^{\mathrm{in}})
\bigl(2\,C_1(T,f^{\mathrm{in}})+\Theta\bigr)
+\frac{A\,C_0}{\theta\,g(1)}
\notag\\
&=: \Lambda_T(f^{\mathrm{in}}) < \infty.
\label{eq:I-bnd-a}
\end{align}
%%%%%%%%%%%%%%%%%%%%%%%%%%%%%%%%%%%%%%%%%%%%%%%%%%%%%%%%%%%%%%%%%%%%%%%%%%%%%%%%%%%%%%%%%%%%%%%%%%%%%%%%%%%%%%%%%%%%%%%%%%%%%%%%%%%%%%%%%%%%%%%%%%%%%%%%%%%%%%%%%%%%%%%%%%%%%%%%%%%%%%%%
In Case~(b), $\ell \ge (3p-2)/(2p) > 1/2$ ensures both $2\ell \ge 1$
and $2(1-p)/p+2\ell \ge 1$, so $y^{2\ell} \le y$ and
$y^{2(1-p)/p+2\ell} \le y$ for $y \in (0,1)$, giving
$\mathcal{J}(s) \le \Theta$ and $\mathcal{K}(s) \le \Theta$.
Inserting into \eqref{est:I-unified} and using $M_0^n \le C_1(T)$
from Lemma~\ref{Lem:ZM}(b) gives
\begin{align}
\mathcal{I}(T)
\le 2A\,\Theta\,C_1(T)\,T + \frac{A\,C_0}{\theta\,g(1)}
=: \Lambda_T < \infty,
\label{eq:I-bnd-b}
\end{align}
where $\Lambda_T$ is independent of $n$ and $f^{\mathrm{in}}$.

In both cases the bound takes the form $\mathcal{I}(T) \le \Lambda_T$,
where
\[
\tilde{\Lambda}_T :=
\begin{cases}
\Lambda_T(f^{\mathrm{in}}) & \text{in Case~(a)},\\
\Lambda_T                  & \text{in Case~(b)},
\end{cases}
\]
is a finite constant independent of $n$.
Taking the supremum over all measurable $E \subset (0,a)$
with $|E| \le \delta$ in \eqref{eq:UI-main} gives
\[
W_{a,\delta}^n(t)
\le W_{a,\delta}^n(0)
+ \left(\frac{B_p}{2}\right)^{\!1/p}
\tilde{\Lambda}_T\,\delta^{(p-1)/p},
\qquad t \in [0,T],
\]
from which \eqref{eq:UI-1} or \eqref{eq:UI-2} follows upon setting
$C_2 := (B_p/2)^{1/p}\tilde{\Lambda}_T$.
The convergence $W_{a,\delta}^n(t) \to 0$ as $\delta \to 0$ holds since
$(p-1)/p \in (0,1)$ and $\{f_n^{\mathrm{in}}\}$ is uniformly integrable
in $L^1(0,a)$ by assumption.
\end{proof}

\begin{remark}
The threshold $\ell = (3p-2)/(2p)$ is strictly greater than $1/2$ for all
$p \in (1,2)$, tending to $1/2$ as $p \to 1^+$ and to $1$ as $p \to 2^-$.
It arises from the condition $2(1-p)/p+2\ell \ge 1$, which ensures
$\mathcal{K}(s) \le M_1^n(s) \le \Theta$ via mass conservation--exactly the same mechanism as in Lemma~\ref{Lem:ZM}, but with the exponent $2\ell$ replaced by $2(1-p)/p + 2\ell$ due to the weight $y^{(1-p)/p}$ inherited from \eqref{eq:p-cond}.
\end{remark}

\begin{lemma}\label{Lem:time-eqc}
Let $T > 0$ and $\lambda \in (1,n)$. Then for every $0 \le s \le t \le T$
there exists a constant $C_3(T) > 0$, independent of $n$, such that
\begin{align}
\int_0^\lambda |f_n(t,x) - f_n(s,x)|\,dx
\le C_3(T)\,(t-s).
\label{eq:time-equicont}
\end{align}
\end{lemma}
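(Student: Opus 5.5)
The plan is to integrate the truncated equation \eqref{Tr:NLBE} in time and in $x\in(0,\lambda)$, and to bound the gain and loss rates \emph{pointwise in time} by a constant independent of $n$. A Lipschitz modulus $C_3(T)(t-s)$, rather than a Hölder one, then follows directly. Since $f_n\in C^1([0,\infty);L^1(0,n))$ by Proposition~\ref{prop:tr-exist}, for $0\le s\le t\le T$ we have
\[
\int_0^\lambda |f_n(t,x)-f_n(s,x)|\,dx\le \int_s^t\bigl(\mathcal{G}_n(\tau)+\mathcal{L}_n(\tau)\bigr)\,d\tau .
\]
Here $\mathcal{G}_n$ and $\mathcal{L}_n$ are the $L^1(0,\lambda)$ norms of the gain and loss terms at time $\tau$. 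The product structure \eqref{eq:kernel-class}, $a(x,y)=A_0\,\omega(x)\omega(y)$, lets both be expressed through the single quantity
\[
\Omega_n(\tau):=\int_0^n \omega(y)\,f_n(\tau,y)\,dy .
\]

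\emph{Loss term.} Since $\omega$ is non-decreasing, $\omega(x)\le \max\{\omega_0(1),\omega_\infty(\lambda)\}=:\omega_\lambda$ for $x<\lambda$. Hence
\[
\mathcal{L}_n(\tau)=A_0\Omega_n(\tau)\int_0^\lambda \omega(x)f_n(\tau,x)\,dx\le A_0\,\omega_\lambda\, M_0^n(\tau)\,\Omega_n(\tau).
\]
\emph{Gain term.} Fubini and \eqref{NOP} give $\int_0^\lambda\int_x^n b(x,y,z)\,\cdots\,dy\,dx\le \beta_0\int_0^n\cdots\,dy$, so that $\mathcal{G}_n(\tau)\le \beta_0 A_0\,\Omega_n(\tau)^2$. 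Both rates are therefore controlled once $\sup_{n}\sup_{\tau\in[0,T]}\Omega_n(\tau)<\infty$ and $M_0^n$ is bounded on $[0,T]$. The latter is exactly Lemma~\ref{Lem:ZM}, in case (a) or (b) according to $\ell$.

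\emph{Bounding $\Omega_n$ pointwise.} This is the main obstacle, because the dissipation estimates \eqref{eq:HMBD2} and \eqref{eq:tail3} only control $\Omega_n$ in $L^2(0,T)$, which would give a $(t-s)^{1/2}$ modulus. I will split $\Omega_n$ at $y=1$. On $(0,1)$, \eqref{eq:w0-growth} gives $\int_0^1\omega_0 f_n\le A_1 M_0^n(\tau)\le A_1C_1(T)$. On $(1,n)$, I will use the pointwise-in-time moment bound \eqref{eq:HMBD1} (equivalently \eqref{eq:tail1} with $m=1$) for the admissible weight $g$. This requires choosing $g\in\mathcal{G}_b$ so that $\omega_\infty\le C\,g$ on $(1,\infty)$. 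Following Remark~\ref{rem:general-admissible}, the natural candidate is $g(x)=x^\alpha\,\omega(x)$ with $\alpha>1$: it has the form $x^\alpha h$ with $h=\omega$ non-decreasing, $g(x)/x$ is non-decreasing, and it dominates $\omega_\infty$ on $(1,\infty)$. With this weight, $\int_1^n\omega_\infty f_n\le \int_1^n g f_n\le C_0/\theta$ for every $\tau\ge0$, uniformly in $n$. I expect the delicate point to be justifying that this choice is compatible with the standing hypothesis $f^{\rm in}\in\Xi_g^+$ used throughout Section~\ref{sec:estimates}. I will make the choice of $g$ explicit at this point so that $C_0=M_g(f^{\rm in})$ is the constant appearing in Corollary~\ref{COR1}.

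\emph{Conclusion.} Combining these bounds gives $\Omega_n(\tau)\le A_1C_1(T)+C_0/\theta=:K_T$ on $[0,T]$. Hence
\[
\mathcal{G}_n+\mathcal{L}_n\le A_0\bigl(\beta_0K_T^2+\omega_\lambda C_1(T)K_T\bigr)=:C_3(T),
\]
which is independent of $n$ and $\tau$. Integrating from $s$ to $t$ yields \eqref{eq:time-equicont}. Strictly, $C_3$ also depends on $\lambda$ through $\omega_\lambda$, and in Case~1 on $f^{\rm in}$ through $C_1$; these dependencies are harmless for the Arzelà--Ascoli argument in Section~\ref{sec:proof}.
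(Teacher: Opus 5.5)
\textbf{The gap is the choice $g(x)=x^\alpha\omega(x)$.} Your gain and loss bounds in terms of $\Omega_n$ and the $(0,1)$ part are fine, but the weight $g\in\mathcal{G}_b$ is not yours to choose. It is part of the data: it is the weight in which $f^{\rm in}$ has the finite moment $C_0=M_g(f^{\rm in})$. No hypothesis relates it to $\omega_\infty$, and the paper explicitly imposes no growth restriction on $\omega_\infty$. Taking $g=x^\alpha\omega$ therefore silently adds the assumption $\int_1^\infty x^\alpha\omega_\infty(x)f^{\rm in}(x)\,dx<\infty$. What you call a delicate compatibility point is really a new hypothesis.

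It cannot be dropped: without control of $\int\omega_\infty f^{\rm in}$, the $n$-independent Lipschitz bound is false. Take $b(x,y,z)=\frac{2}{y}\mathbf{1}_{(0,y)}(x)$. Then $\beta_0=2$, \eqref{eq:p-cond} holds with $B_p=2^{p+1}$, and $g(x)=x^2\in\mathcal{G}_b$ with $\theta=\frac13$. Take $a(y,z)=\omega(y)\omega(z)$ with $\omega(x)=x^{3/4}e^{x}$ (kernel (III), Case 2), and $f^{\rm in}=\mathbf{1}_{(0,1)}+e^{-x}\mathbf{1}_{[1,\infty)}\in\Xi_0\cap\Xi_g^+$. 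Directly from \eqref{Tr:NLBE}, for $1<\lambda<n$,
\[
\frac{d}{dt}\int_0^\lambda f_n\,dx=\int_0^\lambda\!\!\int_0^n a\,f_nf_n\,dz\,dy+\int_\lambda^n\!\!\int_0^n\frac{2\lambda}{y}\,a\,f_nf_n\,dz\,dy\ \ge\ \Bigl(\int_0^\lambda\omega f_n\,dy\Bigr)\Bigl(\int_0^n\omega f_n\,dz\Bigr).
\]
At $t=0$ the right-hand side is at least $c\int_1^n x^{3/4}\,dx\to\infty$, with $c=\int_0^1x^{3/4}e^{x}\,dx$. This rate is continuous in $t$ for fixed $n$, and $\int_0^\lambda|f_n(t)-f_n^{\rm in}|\,dx\ge\int_0^\lambda(f_n(t)-f_n^{\rm in})\,dx$. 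Hence any valid $C_3$ must exceed the rate at $t=0$, which is unbounded in $n$.

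You did correctly identify the real obstacle: \eqref{eq:HMBD2} and \eqref{eq:tail3} control $\int_1^n\omega_\infty f_n$ only in $L^2(0,T)$. The paper's own proof hits the same wall without acknowledging it. It asserts \eqref{eq:collision-bnd} pointwise in $t$ ``by the arguments of Lemma~\ref{Lem:ZM} and Lemma~\ref{Lem:Tail}''. However, the $I_{11}$ and $I_{01}$ contributions there are bounded only after time integration, so \eqref{eq:collision-bnd} is not established, and the example above shows it fails in general.

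There are two honest repairs.
\begin{enumerate}
\item[(i)] Keep your argument and state the extra moment hypothesis. You then also need $x^\alpha(1+\omega(x))\in\mathcal{G}_b$ for a general $b$; Remark~\ref{rem:general-admissible} covers only the power-law $b$, and the factor $1+\omega$ keeps $g>0$ if $\omega_0$ vanishes near $0$. This membership does hold. By H\"older and \eqref{eq:p-cond}, $\int_{(1-\varepsilon)y}^y x\,b\,dx\le(B_p/2)^{1/p}\varepsilon^{(p-1)/p}y$, so fragments cannot carry most of the mass near $x=y$, which yields a uniform $\theta>0$.
\item[(ii)] Keep the paper's hypotheses, drop the Lipschitz rate, and prove an $n$-independent modulus of continuity. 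That is all Section~\ref{sec:proof} uses (Arzel\`a--Ascoli and the proof of \eqref{eq:strong-conv}). Bound $\int_0^\lambda|f_n(t)-f_n(s)|\,dx$ by $(\beta_0+1)\int_s^t\!\iint a\,f_nf_n$ and split at $y=m$.
\begin{enumerate}
\item[$\bullet$] For $y>m$, monotonicity of $g$ and \eqref{eq:tail2} give at most $\frac{1}{\theta g(m)}\int_m^\infty g f^{\rm in}$.
\item[$\bullet$] For $y<m$, use $\int_0^m\omega f_n\le\omega(m)C_1(T)$ pointwise in time, together with $\int_s^t\!\int_0^n\omega f_n\,d\tau\le A_1C_1(T)(t-s)+\sqrt{t-s}\,\bigl(\int_0^T(\int_1^n\omega_\infty f_n)^2d\tau\bigr)^{1/2}$. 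The last factor is controlled by \eqref{eq:tail3}.
\end{enumerate}
Taking the infimum over $m>1$ of the resulting bound gives a modulus $\varpi_T(t-s)\to0$, uniform in $n$.
\end{enumerate}
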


\begin{proof}
It suffices to show that $\|\partial_t f_n(t)\|_{L^1(0,\lambda)}$ is
uniformly bounded on $[0,T]$ independently of $n$.
Splitting $\partial_t f_n$ into its breakage-gain and
breakage-loss contributions and integrating over $(0,\lambda)$,
we obtain via Fubini's theorem
\begin{align}
\int_0^\lambda |\partial_t f_n(t,x)|\,dx
&\le \int_0^n\!\int_0^n
\left(\int_0^{\min(y,\lambda)} b(x,y,z)\,dx\right)
a(y,z)\,f_n(t,y)\,f_n(t,z)\,dy\,dz
\notag\\
&\quad
+ \int_0^\lambda f_n(t,y)\,\int_0^n a(y,z)\,f_n(t,z)\,dz\,dy.
\notag
\end{align}
Applying the fragment-number bound \eqref{NOP} to the first term,
both contributions are controlled as
\begin{align}
\int_0^\lambda |\partial_t f_n(t,x)|\,dx
\le (\beta_0+1)
\int_0^n\!\int_0^n
a(y,z)\,f_n(t,y)\,f_n(t,z)\,dy\,dz.
\label{eq:tec-reduced}
\end{align}
It remains to bound the double integral in \eqref{eq:tec-reduced}
uniformly in $t \in [0,T]$. Decomposing $(0,n)^2$ via
\eqref{eq:kernel-class} into the four standard regions and applying,
in each, the identical Cauchy--Schwarz and Young arguments used in
Lemma~\ref{Lem:ZM}, together with the bounds \eqref{eq:ZM-1}--\eqref{eq:ZM-2}
(depending on the behaviour of the kernel near zero) and the
weighted-moment bounds from Lemma~\ref{Lem:Tail}, one obtains a constant
$K(T) > 0$, independent of $n$ and $t$, such that
\begin{align}
\int_0^n\!\int_0^n
a(y,z)\,f_n(t,y)\,f_n(t,z)\,dy\,dz
\le K(T),
\qquad t \in [0,T].
\label{eq:collision-bnd}
\end{align}
Substituting \eqref{eq:collision-bnd} into \eqref{eq:tec-reduced} yields
$\|\partial_t f_n(t)\|_{L^1(0,\lambda)} \le (\beta_0+1)\,K(T)$
for all $t \in [0,T]$, and integrating over $[s,t]$ gives
\eqref{eq:time-equicont} with $C_3(T) := (\beta_0+1)\,K(T)$.
\end{proof}

We shall make use of the following continuity result for bilinear integral operators with respect to weak convergence in $L^1$ spaces. Its proof follows along the same lines as that of \cite[Lemma 4.1]{Stewart1989}, and is therefore omitted.

\begin{lemma}\label{lem:bilinear}
Let $0<a\le b<\infty$ and let
\[
\Omega\in L^\infty\bigl((0,a)\times(0,b)\bigr).
\]
Define the bilinear functional
\[
\Upsilon(f,g)
:=
\int_0^a\int_0^b
\Omega(x,y)\,f(x)\,g(y)\,dy\,dx,
\qquad
(f,g)\in L^1(0,a)\times L^1(0,b).
\]
Suppose that $(f_n)$ converges weakly to $f$ in $L^1(0,a)$ and that
$(g_n)$ converges weakly to $g$ in $L^1(0,b)$. Then
\[
\lim_{n\to\infty}\Upsilon(f_n,g_n) =\Upsilon(f,g).
\]
\end{lemma}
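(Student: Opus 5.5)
The statement to prove is Lemma~\ref{lem:bilinear}, the weak continuity of the bilinear functional $\Upsilon$ with bounded kernel $\Omega$. My plan is to reduce it to the case of a product kernel, where the conclusion is immediate from weak convergence. Then I pass to general $\Omega\in L^\infty$ by an approximation that is uniform along the sequence; the uniformity comes from the Dunford--Pettis theorem.

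\emph{Step 1 (product kernels).} If $\Omega(x,y)=\varphi(x)\chi(y)$ with $\varphi\in L^\infty(0,a)$ and $\chi\in L^\infty(0,b)$, then
\[
\Upsilon(f_n,g_n)=\Bigl(\int_0^a\varphi f_n\,dx\Bigr)\Bigl(\int_0^b\chi g_n\,dy\Bigr).
\]
Each factor converges by the definition of weak convergence in $L^1$, so $\Upsilon(f_n,g_n)\to\Upsilon(f,g)$. By bilinearity the same holds for finite sums of such products, i.e.\ for simple functions on measurable rectangles.

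\emph{Step 2 (uniform integrability).} Weakly convergent sequences in $L^1$ are bounded, so $\sup_n(\|f_n\|_{L^1}+\|g_n\|_{L^1})\le L$. By Dunford--Pettis, $(f_n)$ and $(g_n)$ are also uniformly integrable. Hence for every $\varepsilon>0$ there is $\delta>0$ such that $\int_E|f_n|\,dx<\varepsilon$ and $\int_F|g_n|\,dy<\varepsilon$ whenever $|E|,|F|<\delta$, uniformly in $n$, and the same holds for $f$ and $g$. Truncation gives a similar uniform control: with $f_n^K:=f_n\mathbf 1_{\{|f_n|\le K\}}$, we have $\|f_n-f_n^K\|_{L^1}\le\varepsilon$ for $K$ large, uniformly in $n$, and likewise for $g_n$.

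\emph{Step 3 (approximation, the main obstacle).} Fix $\varepsilon>0$. By Lusin's theorem and density, choose $\Omega_\varepsilon$ with the following properties: it is a finite linear combination of indicators of rectangles, $\|\Omega_\varepsilon\|_\infty\le\|\Omega\|_\infty$, and $\Omega_\varepsilon=\Omega$ outside a set $G\subset(0,a)\times(0,b)$ of two-dimensional measure at most $\eta$. The difficulty is to bound
\[
\iint_G|\Omega-\Omega_\varepsilon|\,|f_n(x)|\,|g_n(y)|\,dx\,dy
\]
uniformly in $n$. A small planar set $G$ need not have small sections, so uniform integrability in each variable does not control this directly. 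I would handle it by truncation. First replace $f_n,g_n$ by $f_n^K,g_n^K$; by Step 2 this costs at most $2\|\Omega\|_\infty(L\varepsilon+\varepsilon^2)$. For the truncated functions,
\[
\iint_G|\Omega-\Omega_\varepsilon|\,|f_n^K|\,|g_n^K|\le 2\|\Omega\|_\infty K^2\eta.
\]
Choosing $\eta\le\varepsilon/K^2$ after $K$ is fixed makes this $O(\varepsilon)$. The same estimate applies to the limits $f,g$.

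\emph{Step 4 (conclusion).} We now have $|\Upsilon(f_n,g_n)-\Upsilon_\varepsilon(f_n,g_n)|\le C\varepsilon$ uniformly in $n$, where $\Upsilon_\varepsilon$ is $\Upsilon$ with kernel $\Omega_\varepsilon$, and the same bound holds for $(f,g)$. Combining this with Step 1 for $\Upsilon_\varepsilon$ gives
\[
\limsup_{n\to\infty}|\Upsilon(f_n,g_n)-\Upsilon(f,g)|\le 2C\varepsilon.
\]
Since $\varepsilon>0$ is arbitrary, the claim follows.
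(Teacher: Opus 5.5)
The paper does not prove this lemma; it only refers to Stewart's Lemma~4.1. Your strategy is sound: exact convergence for rectangle step kernels, plus a remainder bound that is uniform in $n$ because truncation makes the products $f_n(x)g_n(y)$ equi-integrable on $(0,a)\times(0,b)$. You also correctly identify the real difficulty, that a planar set of small measure need not have small sections.

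One claim in Step~3 is false as stated, however. A finite linear combination of indicators of rectangles takes only finitely many values, so it cannot coincide with a general $\Omega\in L^\infty$ outside a set of small measure. For $\Omega(x,y)=x$, any such $\Omega_\varepsilon$ agrees with $\Omega$ only on a null set. Lusin's theorem followed by uniform approximation of the continuous extension gives only $|\Omega-\Omega_\varepsilon|\le\varepsilon$ off $G$.

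The repair is immediate: the region off $G$ then contributes at most $\varepsilon\,\|f_n\|_{L^1}\|g_n\|_{L^1}\le\varepsilon L^2$, uniformly in $n$. More simply, you can drop Lusin and $G$ altogether. Once $K$ is fixed, take a rectangle step function with $\|\Omega-\Omega_\varepsilon\|_{L^1((0,a)\times(0,b))}\le\varepsilon/K^2$, and clip it at the level $\|\Omega\|_\infty$. Clipping keeps it a step function and, since $|\Omega|\le\|\Omega\|_\infty$ a.e., does not increase the distance to $\Omega$. Then
\[
\int_0^a\!\int_0^b|\Omega-\Omega_\varepsilon|\,|f_n^K(x)|\,|g_n^K(y)|\,dy\,dx\le K^2\,\|\Omega-\Omega_\varepsilon\|_{L^1}\le\varepsilon .
\]
The clipping matters. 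The bound $\|\Omega_\varepsilon\|_\infty\le\|\Omega\|_\infty$ keeps the truncation cost independent of $\Omega_\varepsilon$, which is chosen after $K$.

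For comparison, there is a shorter argument that never approximates $\Omega$. Write
\[
\Upsilon(f_n,g_n)-\Upsilon(f,g)=\int_0^a f_n(x)\,\phi_n(x)\,dx+\int_0^a\bigl(f_n(x)-f(x)\bigr)\,\Phi(x)\,dx,
\]
where $\phi_n(x):=\int_0^b\Omega(x,y)\bigl(g_n(y)-g(y)\bigr)\,dy$ and $\Phi(x):=\int_0^b\Omega(x,y)\,g(y)\,dy$. Since $\Phi\in L^\infty(0,a)$, the second integral tends to zero by weak convergence of $(f_n)$. For a.e.\ $x$ the section $\Omega(x,\cdot)$ belongs to $L^\infty(0,b)$, so $\phi_n(x)\to0$ a.e. Also $|\phi_n|\le\|\Omega\|_\infty\sup_k\|g_k-g\|_{L^1}$. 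Egorov's theorem combined with the equi-integrability of $(f_n)$ then gives $\int_0^a f_n\phi_n\,dx\to0$.

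This route uses equi-integrability of only one of the two sequences and avoids the section problem entirely. Your repaired route, on the other hand, makes explicit that $f_n(x)g_n(y)$ is equi-integrable on the product, which is the structural reason the lemma holds.
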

%%%%%%%%%%%%%%%%%%%%%%%%%%%%%%%%%%%%
\section{\textbf{Proof of Theorem~\ref{Thm:GE}}}\label{sec:proof}
%%%%%%%%%%%%%%%%%%%%%%%%%%%%%%%%%%%%

\begin{proof}
Fix $T>0$ and $a\in(1,n)$. We collect the uniform-in-$n$ estimates available on $[0,T]$.
From Lemma~\ref{Lem:ZM}(b),
\begin{align}
\sup_{n\ge1}\sup_{t\in[0,T]}
\int_0^a f_n(t,x)\,dx
\le
\sup_{n\ge1}\sup_{t\in[0,T]} M_0^n(t)
\le C_1(T).
\label{eq:unif-L1}
\end{align}
From Lemma~\ref{Lem:time-eqc}, for all $n>a$ and $s,t\in[0,T]$,
\begin{align}
\int_0^a |f_n(t,x)-f_n(s,x)|\,dx
\le C_3(T)\,|t-s|,
\label{eq:unif-Lip}
\end{align}
so the family $\{f_n\}_{n\ge1}$ is equicontinuous from $[0,T]$ into $L^1(0,a)$.
From Lemma~\ref{Lem:UI}(b), for every $\varepsilon>0$ there exists $\delta>0$
such that for every measurable set $E\subset(0,a)$ with $|E|\le\delta$ and all
$n\ge1$, $t\in[0,T]$,
\begin{align}
\int_E f_n(t,x)\,dx<\varepsilon.
\label{eq:unif-int}
\end{align}

The uniform bound \eqref{eq:unif-L1} together with the uniform integrability
property \eqref{eq:unif-int}, applied on the bounded domain $(0,a)$, implies by
the Dunford--Pettis theorem that, for each fixed $t\in[0,T]$, the set
$\{f_n(t,\cdot)\}_{n\ge1}$ is relatively weakly compact in $L^1(0,a)$.
Combined with the equicontinuity \eqref{eq:unif-Lip}, this yields relative
compactness in $\mathcal{C}([0,T];L^1_w(0,a))$. Consequently, by a diagonal extraction,
there exists a subsequence (not relabelled) and a limit function
$f:[0,\infty)\times(0,\infty)\to\mathbb{R}$ such that
\begin{align}
f_n \longrightarrow f
\quad\text{in } \mathcal{C}([0,T];L^1_w(0,a)),
\quad \text{for every } a>1 \text{ and } T>0.
\label{eq:weak-conv}
\end{align}
Since $f(t,\cdot)$ is obtained as the weak limit of a sequence of non-negative functions, it follows that $f(t,x)\ge0$  for a.e. in $(0,\infty)$ for every $t\in\mathbb{R}_+$.

Passing to the limit $n \to \infty$ in the uniform estimates for $f_n$, we obtain the following properties of $f$. From Corollary~\ref{COR1}, the truncated solutions satisfy
\begin{align*}
\int_0^n g(x)\,f_n(t,x)\,dx \le \frac{C_0}{\theta},
\qquad t \ge 0,\quad n \ge 1.
\end{align*}
For any fixed $a > 1$, since $g \in L^\infty(0,a)$
and $f_n(t) \rightharpoonup f(t)$ weakly in $L^1(0,a)$
by \eqref{eq:weak-conv}, we may pass to the limit to obtain
\begin{align*}
\int_0^a g(x)\,f(t,x)\,dx
= \lim_{n \to \infty} \int_0^a g(x)\,f_n(t,x)\,dx
\le \limsup_{n \to \infty} \int_0^n g(x)\,f_n(t,x)\,dx
\le \frac{C_0}{\theta}.
\end{align*}
Letting $a \to \infty$ and applying the monotone convergence
theorem (since $g \ge 0$ and $f \ge 0$),
\begin{align}
\int_0^\infty g(x)\,f(t,x)\,dx
\le \frac{C_0}{\theta},
\qquad t \ge 0,
\label{eq:lim-HMBD}
\end{align}
so $f \in L^\infty\bigl((0,\infty);\,\Xi_g^+\bigr)$.
%%%%%%%%%%%%%%%%%%%%%%%%%%%%%%%%%%%%%%%%%%%%%%%%%%%%%%%%%%%%%%%%%%%%%%%%%%%%%%%%%%%%%%%%%%%%%%%%%%%%%%%%%%%%%%%%%%%%%%%%%%%%%%%%%%%%%%%%%%%%%%%%%%%%%%%%%%%%%%%%%%%%%%%%%%%%%%%%%%%%%%%%
Next, we show that the estimate \eqref{eq:HMBD2} carries over to the
limit function $f$, namely
\begin{align}
\int_0^t\!\int_0^\infty\!\int_0^\infty
g(y)\,a(y,z)\,f(s,y)\,f(s,z)\,dy\,dz\,ds
\le \frac{C_0}{\theta},
\qquad t\ge0.
\label{eq:HMBD2-lim}
\end{align}
Fix $a>1$ and $s\in[0,t]$. Since $g(\cdot)\,a(\cdot,z)\in L^\infty(0,a)$
for each $z\in(0,a)$, the kernel
$\Omega(y,z):=g(y)\,a(y,z)$ belongs to $L^\infty((0,a)\times(0,a))$.
By the weak convergence $f_n(s)\rightharpoonup f(s)$ in $L^1(0,a)$
from \eqref{eq:weak-conv} (which holds for every $s\in[0,t]$),
Lemma~\ref{lem:bilinear} gives, for each fixed $s\in[0,t]$,
\[
\int_0^a\!\int_0^a g(y)\,a(y,z)\,f_n(s,y)\,f_n(s,z)\,dy\,dz
\;\xrightarrow{n\to\infty}\;
\int_0^a\!\int_0^a g(y)\,a(y,z)\,f(s,y)\,f(s,z)\,dy\,dz.
\]

Define
\[
h_n(s)
:=
\int_0^a\!\int_0^a g(y)\,a(y,z)\,f_n(s,y)\,f_n(s,z)\,dy\,dz,
\]
\[
h(s)
:=
\int_0^a\!\int_0^a g(y)\,a(y,z)\,f(s,y)\,f(s,z)\,dy\,dz.
\]
Since $h_n(s)\to h(s)$ for every $s\in[0,t]$ and $h_n(s)\ge0$ for all $s\in[0,t]$ (because $g$, $a$, and $f_n$ are non-negative), it follows from Fatou's lemma that
\begin{align*}
\int_0^t h(s)\,ds
= \int_0^t \lim_{n\to\infty} h_n(s)\,ds
= \int_0^t \liminf_{n\to\infty} h_n(s)\,ds
\le \liminf_{n\to\infty}\int_0^t h_n(s)\,ds.
\end{align*}
Since \begin{align*}
h_n(s)\le \int_0^n\!\int_0^n g\,a\,f_n f_n\,dy\,dz
\end{align*}
for all $n\ge a$, the bound \eqref{eq:HMBD2} gives \begin{align*}
\int_0^t h_n(s)\,ds\le \frac{C_0}{\theta},
\end{align*}
 and therefore
\begin{align}
\int_0^t\!\int_0^a\!\int_0^a
g(y)\,a(y,z)\,f(s,y)\,f(s,z)\,dy\,dz\,ds
\le \frac{C_0}{\theta}.
\label{eq:HMBD2-a}
\end{align}

Since $g(y)\,a(y,z)\,f(s,y)\,f(s,z)\ge0$, the left-hand side of
\eqref{eq:HMBD2-a} is non-decreasing in $a$. Letting $a\to\infty$
and applying the monotone convergence theorem, the claim \eqref{eq:HMBD2-lim} hold.
%%%%%%%%%%%%%%%%%%%%%%%%%%%%%%%%%%%%%%%%%%%%%%%%%%%%%%%%%%%%%%%%%%%%%%%%%%%%%%%%%%%%%%%%%%%%%%%%%%%%%%%%%%%%%%%%%%%%%%%%%%%%%%%%%%%%%%%%%%%%%%%%%%%%%%%%%%%%%%%%%%%%%%%%%%%%%%%%%%%%%%%%
From \eqref{eq:mass-ineq}, for any $a > 0$,
\begin{align*}
\int_0^a x\,f(t,x)\,dx
= \lim_{n\to\infty}\int_0^a x\,f_n(t,x)\,dx \le \Theta,
\end{align*}
so \eqref{eq:mass-ineq-limit} holds by monotone convergence.

From \eqref{eq:tail1} of Lemma~\ref{Lem:Tail} with $m > 1$ and the
convergence \eqref{eq:weak-conv}, for any $a > m$,
\begin{align*}
\int_m^a g(x)\,f(t,x)\,dx
= \lim_{n\to\infty}
\int_m^a g(x)\,f_n(t,x)\,dx
\le \int_m^\infty g(x)\,f^{\mathrm{in}}(x)\,dx.
\end{align*}
Letting $a \to \infty$ by monotone convergence, we obtain
\begin{align}
\int_m^\infty g(x)\,f(t,x)\,dx \le C_0,
\qquad t \ge 0,\quad m > 1.
\label{eq:tail-lim}
\end{align}
Since $g$ is non-decreasing on $(0,\infty)$ (which follows from
$g(x)/x$ non-decreasing and $g \ge 0$), we have $g(x) \ge g(m)$ for
$x \ge m$, so
\begin{align}
\int_m^\infty f(t,x)\,dx
\le \frac{C_0}{g(m)} \to 0
\quad \text{as } m \to \infty,
\label{eq:tail-f}
\end{align}
uniformly in $t \ge 0$.

From \eqref{eq:tail3} of Lemma~\ref{Lem:Tail} with $m > 1$, for any
$a > m$ and $T > 0$,
\begin{align*}
\int_0^T\!\left(\int_m^a
\omega_\infty(y)\,f_n(s,y)\,dy\right)^{\!2}\!ds
\le \frac{C_0}{\theta\,g(m)}.
\end{align*}
Applying Fatou's lemma as $n \to \infty$ and then $a \to \infty$,
\begin{align}
\int_0^T\!\left(\int_m^\infty
\omega_\infty(y)\,f(s,y)\,dy\right)^{\!2}\!ds
\le \frac{C_0}{\theta\,g(m)},
\qquad m > 1,\quad T > 0.
\label{eq:tail3-lim}
\end{align}

We now claim that 
\begin{align}
f \in \mathcal{C}([0,\infty);\Xi_0).\label{eq:strong-conv}
\end{align}
 For any $t,s \in [0,T]$ and $m > 1$,
\begin{align*}
\|f_n(t)-f_n(s)\|_{\Xi_0}
&\le \int_0^m |f_n(t)-f_n(s)|\,dx
+ \int_m^\infty f_n(t,x)\,dx
+ \int_m^\infty f_n(s,x)\,dx.
\end{align*}
Using \eqref{eq:unif-Lip}, and \eqref{eq:tail-f}, we deduce
\begin{align*}
\|f_n(t)-f_n(s)\|_{\Xi_0}
\le C_3(T)|t-s| + \frac{2\,C_0}{g(m)}.
\end{align*}
Given $\varepsilon > 0$, first choose $m$ large enough so that
$2C_0/g(m) < \varepsilon/2$ (possible since $C_0/g(m) \to 0$), then
choose $|t-s| < \varepsilon/(2C_3(T))$. This yields
\begin{align}
\|f_n(t)-f_n(s)\|_{\Xi_0}<\varepsilon, \label{eq:strong-conv-fn}
\end{align}
and therefore the family $(f_n)$ is equicontinuous in
$L^1(0,\infty)$ endowed with its strong topology. Since $f_n(t)-f_n(s)$ converges weakly to $f(t)-f(s)$ in $L^1(0,\infty)$, it follows from \eqref{eq:strong-conv-fn} and the weak lower
semicontinuity of the $L^1$-norm that
\begin{align*}
\|f(t)-f(s)\|_{\Xi_0}\le \varepsilon,
\end{align*}
which proves \eqref{eq:strong-conv}. Fix $\psi \in L^\infty(0,\infty)$ and $t > 0$.
By \eqref{NOP} and \eqref{def:tilde-psi},
\begin{align*}
|\tilde\psi| \le (\beta_0+1)\|\psi\|_{L^\infty} =: C_\psi.
\end{align*}
The truncated weak formulation \eqref{Tr:WF} rearranges to
\begin{align}
\int_0^n\!\psi(x)\bigl(f_n(t,x)
-f_n^{\mathrm{in}}(x)\bigr)\,dx
=\int_0^t\!\int_0^n\!\int_0^n
\tilde\psi(y,z)\,a(y,z)\,f_n(s,y)\, f_n(s,z)\,dy\,dz\,ds.
\label{eq:tr-wf-pass}
\end{align}
For the left-hand side of \eqref{eq:tr-wf-pass}, for any $m > 1$ we write
\begin{align*}
\int_0^n\!\psi(x)\,(f_n(t,x)-f_n^{\mathrm{in}}(x))\,dx
=&\int_0^m\psi(x)\,(f_n(t,x)-f_n^{\mathrm{in}}(x))\,dx\\
&\qquad +\int_m^n\psi(x)\,(f_n(t,x)-f_n^{\mathrm{in}}(x))\,dx.
\end{align*}
By \eqref{eq:strong-conv}, the first term converges to
$\int_0^m\psi(x)\,(f(t)-f^{\mathrm{in}})\,dx$. For the second, using
$g(x) \ge g(m)$ for $x \ge m$ and Corollary~\ref{COR1},
\begin{align*}
\left|\int_m^n\psi(x)\,f_n(t)\,dx\right|
\le \frac{\|\psi(x)\|_{L^\infty}}{g(m)}
\int_m^n g(x)\,f_n(t,x)\,dx
\le \|\psi\|_{L^\infty}\,\frac{C_0}{\theta\,g(m)},
\end{align*}
and similarly
\begin{align*}
\left|\int_m^n\psi(x)\,f_n^{\mathrm{in}}(x)\,dx\right|
\le \|\psi\|_{L^\infty}\frac{C_0}{g(m)}.
\end{align*}
Both terms tend to $0$ as $m \to \infty$. The same bounds hold for $f$ using
\eqref{eq:lim-HMBD} and \eqref{eq:tail-lim}. Letting $n \to \infty$
and then $m \to \infty$,
\begin{align}
\lim_{n\to\infty}
\int_0^n\!\psi(x)\bigl(f_n(t,x)
-f_n^{\mathrm{in}}(x)\bigr)\,dx
=\int_0^\infty\psi(x)
\bigl(f(t,x)-f^{\mathrm{in}}(x)\bigr)\,dx.
\label{eq:lhs-pass}
\end{align}

Fix $a > 1$ (to be chosen). For $n \ge a$, split the double integral as
\[
\int_0^t\!\int_0^n\!\int_0^n
\tilde\psi(y,z)\,a(y,z)\,f_n(s,y)f_n(s,z)\,dy\,dz\,ds
=\mathcal{T}_1^n(a)
+2\,\mathcal{T}_2^n(a)
+\mathcal{T}_3^n(a),
\]
where
\begin{align*}
\mathcal{T}_1^n(a)
&:=\int_0^t\!\int_0^a\!\int_0^a
\tilde\psi(y,z)\,a(y,z)\,f_n(s,y)f_n(s,z)\,dz\,dy\,ds, \\
\mathcal{T}_2^n(a)
&:=\int_0^t\!\int_0^a\!\int_a^n
\tilde\psi(y,z)\,a(y,z)\,f_n(s,y)f_n(s,z)\,dz\,dy\,ds, \\
\mathcal{T}_3^n(a)
&:=\int_0^t\!\int_a^n\!\int_a^n
\tilde\psi(y,z)\,a(y,z)\,f_n(s,y)f_n(s,z)\,dz\,dy\,ds.
\end{align*}
The quantities $\mathcal{T}_1(a)$, $\mathcal{T}_2(a)$, $\mathcal{T}_3(a)$
are defined by replacing $f_n$ with $f$ and integrating over $(0,a)^2$,
$(0,a)\times(a,\infty)$, and $(a,\infty)^2$ respectively.
%%%%%%%%%%%%%%%%%%%%%%%%%%%%%%%%%%%%%%%%%%%%%%%%%%%%%%%%%%%%%%%%%%%%%%%%%%%%%%%%%%%%%%%%%%%%
Define
\begin{align*}
\mathcal{Z}_1^n(a,s)
:=
\int_0^a\int_0^a
\tilde{\psi}(y,z)\,a(y,z)\,
f_n(s,y)f_n(s,z)\,dz\,dy,
\end{align*}
and
\begin{align*}
\mathcal{Z}_1(a,s)
:=
\int_0^a\int_0^a
\tilde{\psi}(y,z)\,a(y,z)\,
f(s,y)f(s,z)\,dz\,dy.
\end{align*}
Since $a(\cdot,\cdot)\in L^\infty((0,a)\times(0,a))$, it follows from
Lemma~\ref{lem:bilinear} together with the weak convergence
\eqref{eq:weak-conv} that, for each fixed $s\in(0,t)$,
\begin{align*}
\lim_{n\to\infty}\mathcal{Z}_1^n(a,s)
=
\mathcal{Z}_1(a,s).
\end{align*}
Moreover, using \eqref{eq:unif-L1} and the boundedness of
$a\tilde{\psi}$ on $(0,a)\times(0,a)$, we obtain
\begin{align*}
|\mathcal{Z}_1^n(a,s)|
&\le
C_{\psi}\,\|a\|_{L^\infty((0,a)^2)}
\left(\int_0^a f_n(s,y)\,dy\right)
\left(\int_0^a f_n(s,z)\,dz\right) \\
&\le
C_{\psi}\,\|a\|_{L^\infty((0,a)^2)}\,C_1(T)^2,
\end{align*}
uniformly with respect to $n$ and $s\in[0,t]$.
Therefore, by the Lebesgue dominated convergence theorem,
\begin{align}
\lim_{n\to\infty}\mathcal{T}_1^n(a)
&=
\lim_{n\to\infty}
\int_0^t \mathcal{Z}_1^n(a,s)\,ds\notag \\
&=
\int_0^t \mathcal{Z}_1(a,s)\,ds \notag\\
&=
\int_0^t\int_0^a\int_0^a
\tilde{\psi}(y,z)\,a(y,z)\,
f(s,y)f(s,z)\,dz\,dy\,ds. \label{eq:T1-conv}
\end{align}
%%%%%%%%%%%%%%%%%%%%%%%%%%%%%%%%%%%%%%%%%%%%%%%%%%%%%%%%%%%%%%%%%%%%%%%%%%%%%%%%%%%%%%%%%%%%%%%%%%%%%%%%%%%%%%%%%%%%%%%%%%%%%%%%%%%%%%%%%%%%%%%%%%%%%%%%%%%%%%%%%%%%%%%%%%%%%%%%%%%%%%%%

To bound $\mathcal{T}_2^n(a)$, we split $(0,a) = (0,1)\cup(1,a]$ in the
$y$-variable and use the kernel structure \eqref{eq:kernel-class}.
For $y \in (0,1)$, $z \in (a,n)$,
$a(y,z) = A_0\,\omega_0(y)\omega_\infty(z)
\le A_0A_1\, y^\ell\omega_\infty(z)$.
Applying the Cauchy--Schwarz inequality in $s$, then the estimate
\begin{align*}
\bigl(\int_0^1 y^\ell f_n(s,y)\,dy\bigr)^2 \le \mathcal{J}(s)M_0^n(s)
\le \Theta\,C_1(T)
\end{align*}
from Lemma~\ref{Lem:ZM}(b), and \eqref{eq:tail3} with $m=a$,
\begin{align}
\left|\int_0^t\!\int_0^1\!\int_a^n
\tilde\psi\,a\,f_nf_n\right|
&\le A\,C_{\psi}
\sqrt{\int_0^t\!\left(\int_0^1 y^\ell f_n\,dy
\right)^{\!2}\!ds}\,
\sqrt{\int_0^t\!\left(\int_a^n
\omega_\infty f_n\,dz\right)^{\!2}\!ds}
\notag\\
&\le A\,C_{\psi}
\sqrt{\Theta\,C_1(T)\,T}\cdot
\sqrt{\frac{C_0}{\theta\,g(a)}}.
\label{eq:T2a-bnd}
\end{align}
For $y \in (1,a]$, $z \in (a,n)$,
$a(y,z) = A_0\,\omega_\infty(y)\omega_\infty(z)$
from \eqref{eq:kernel-class}. Applying the Cauchy--Schwarz inequality
and \eqref{eq:tail3} with $m=1$ and $m=a$,
\begin{align}
\left|\int_0^t\!\int_1^a\!\int_a^n
\tilde\psi\,a\,f_nf_n\right|
&\le A_0C_{\psi}
\sqrt{\int_0^t\!\left(\int_1^n
\omega_\infty f_n\,dy\right)^{\!2}\!ds}\,
\sqrt{\int_0^t\!\left(\int_a^n
\omega_\infty f_n\,dz\right)^{\!2}\!ds}
\notag\\
&\le A_0C_{\psi}
\sqrt{\frac{C_0}{\theta\,g(1)}}\cdot
\sqrt{\frac{C_0}{\theta\,g(a)}}.
\label{eq:T2b-bnd}
\end{align}
Combining \eqref{eq:T2a-bnd} and \eqref{eq:T2b-bnd},
\begin{align}
\left|\mathcal{T}_2^n(a)\right|
\le A\,C_{\psi}\sqrt{\frac{C_0}{\theta\,g(a)}} \Bigg(\sqrt{\Theta\,C_1(T)\,T}
+\sqrt{\frac{C_0}{\theta\,g(1)}}\Bigg),
\label{eq:T2-bnd}
\end{align}
uniformly in $n \ge a$.

For $y,z \in (a,n)$,
$a(y,z) = A_0\,\omega_\infty(y)\omega_\infty(z)$
from \eqref{eq:kernel-class}. By \eqref{eq:tail3} with $m=a$,
\begin{align}
\left|\mathcal{T}_3^n(a)\right|
\le A_0C_{\psi}
\int_0^t\!\left(\int_a^n
\omega_\infty f_n\,dy\right)^{\!2}\!ds
\le \frac{A_0\,C_{\psi}\,C_0}{\theta\,g(a)},
\label{eq:T3-bnd}
\end{align}
uniformly in $n \ge a$.

Applying the Cauchy--Schwarz inequality to $f$ and using
\eqref{eq:tail3-lim} and \eqref{eq:tail-lim} in place of the
truncated estimates,
\begin{align}
|\mathcal{T}_2(a)| + |\mathcal{T}_3(a)|
\le A\,C_{\psi}\,\sqrt{\frac{C_0}{\theta\,g(a)}} \Bigg(
\sqrt{\Theta\,C_1(T)\,T}
+\sqrt{\frac{C_0}{\theta\,g(1)}}\Bigg)
+  \frac{A_0\, C_{\psi}\,C_0}{\theta\,g(a)},
\label{eq:T23-lim-bnd}
\end{align}
uniformly in $t \in [0,T]$.
Collecting all estimates, for any $a > 1$ and $n \ge a$,
\begin{align*}
&\left|
\int_0^t\!\int_0^n\!\int_0^n
\tilde\psi\,a\,f_nf_n
-\int_0^\infty\!\int_0^\infty
\tilde\psi\,a\,ff
\right| \\
&\le
|\mathcal{T}_1^n(a)-\mathcal{T}_1(a)|
+2|\mathcal{T}_2^n(a)|+|\mathcal{T}_3^n(a)|
+2|\mathcal{T}_2(a)|+|\mathcal{T}_3(a)|.
\end{align*}
Given $\varepsilon > 0$, by \eqref{eq:T2-bnd}, \eqref{eq:T3-bnd},
and \eqref{eq:T23-lim-bnd}, first choose $a$ large enough so that
\begin{align*}
2\,C_{\psi}\,A\,\sqrt{\frac{C_0}{\theta\,g(a)}} \Bigg(
\sqrt{\Theta\,C_1(T)\,T}
+\sqrt{\frac{C_0}{\theta\,g(1)}}\Bigg) + \frac{A_0\,C_{\psi}\,C_0}{\theta\,g(a)}< \frac{\varepsilon}{4},
\end{align*}
which is possible since $C_0/g(a) \to 0$ as $a \to \infty$. Then, for
this fixed $a$, choose $n$ large enough via \eqref{eq:T1-conv} so that
$|\mathcal{T}_1^n(a)-\mathcal{T}_1(a)| < \varepsilon/2$. Hence
\begin{align}
\lim_{n\to\infty}
\int_0^t\!\int_0^n\!\int_0^n
\tilde\psi(y,z)\,&a(y,z)\,f_n(s,y)f_n(s,z)
\,dy\,dz\,ds \notag\\
&=\int_0^\infty\!\int_0^\infty
\tilde\psi(y,z)\,a(y,z)\,f(s,y)f(s,z)
\,dy\,dz\,ds.
\label{eq:rhs-pass}
\end{align}
Passing to the limit in \eqref{eq:tr-wf-pass} using
\eqref{eq:lhs-pass} and \eqref{eq:rhs-pass}, the limit function $f$
satisfies the weak formulation \eqref{eq:def-wf} for every
$\psi \in L^\infty(0,\infty)$ and $t > 0$. Since $f \ge 0$,
$f \in \mathcal{C}([0,\infty);\Xi_0)$, and $f \in L^\infty((0,\infty);\Xi_g^+)$,
we conclude that $f$ is a global weak solution in the sense of
Definition~\ref{DEF1}.

\begin{remark}
Under condition \eqref{eq:Case1} in place of \eqref{eq:Case2}, the same
argument applies on any fixed interval $[0,T]$, replacing
Lemma~\ref{Lem:ZM}(b) and Lemma~\ref{Lem:UI}(b) by their counterparts
(a) and using Lemma~\ref{Lem:time-eqc} with the initial-data-dependent
constant $C_i(T,f^{\mathrm{in}})$, for $i=1,2,3$. This yields a local-in-time weak
solution on $[0,T]$.
\end{remark}
\end{proof}

%%%%%%%%%%%%%%%%%%%%%%%%%%%%%%%%%%%%
\section{\textbf{Mass Conservation}}\label{sec:mass}
%%%%%%%%%%%%%%%%%%%%%%%%%%%%%%%%%%%%

\begin{prop}[Mass conservation]\label{Prop:MC}
Let $T \in (0,\infty]$, $f^{\mathrm{in}} \in \Xi_0 \cap \Xi_g^+$, and let
$f$ be a weak solution to \eqref{NLBE}--\eqref{IC} on $[0,T)$ in the
sense of Definition~\ref{DEF1}. Assume further that
\begin{align}
s \mapsto
\int_0^\infty\!\int_0^\infty
y\,a(y,z)\,f(s,y)\,f(s,z)\,dy\,dz
\;\in\; L^1(0,t)
\label{eq:MC-cond}
\end{align}
for all $t \in (0,T)$. Then $f$ is mass-conserving on $[0,T)$, i.e.,
\begin{align}
\int_0^\infty x\,f(t,x)\,dx
= \int_0^\infty x\,f^{\mathrm{in}}(x)\,dx
= \Theta,
\qquad t \in [0,T).
\label{eq:MC}
\end{align}
\end{prop}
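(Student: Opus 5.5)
The plan is to test the weak formulation \eqref{eq:def-wf} with the truncated weights $\psi_m(x):=\min(x,m)$, $m>1$. Each $\psi_m$ is bounded and hence admissible. We then let $m\to\infty$ and show that the right-hand side vanishes in the limit. With this choice, \eqref{eq:def-wf} gives
\[
\int_0^\infty \psi_m(x)\bigl(f(t,x)-f^{\mathrm{in}}(x)\bigr)\,dx
=\frac12\int_0^t\!\int_0^\infty\!\int_0^\infty \tilde\psi_m(y,z)\,a(y,z)\,f(s,y)f(s,z)\,dy\,dz\,ds .
\]

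First, compute $\tilde\psi_m$ using \eqref{LMC}. For $y\le m$ we have $\psi_m(x)=x$ on $(0,y)$, so $\tilde\psi_m(y,z)=\int_0^y x\,b\,dx-y=0$. For $y>m$ we get
\[
\tilde\psi_m(y,z)=\int_0^y \min(x,m)\,b(x,y,z)\,dx-m.
\]
Since $\min(x,m)\le x$, \eqref{LMC} bounds the integral by $y$. Hence $|\tilde\psi_m(y,z)|\le y+m\le 2y$ on $\{y>m\}$. Therefore the right-hand side is bounded in absolute value by
\[
\int_0^t\!\int_0^\infty\!\int_m^\infty y\,a(y,z)\,f(s,y)f(s,z)\,dy\,dz\,ds .
\]

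Second, pass to the limit in this bound. By the integrability assumption \eqref{eq:MC-cond} (together with Fubini/Tonelli), the function $(s,y,z)\mapsto y\,a(y,z)f(s,y)f(s,z)$ lies in $L^1((0,t)\times(0,\infty)^2)$. The domination $|\tilde\psi_m a ff|\le 2y\,a\,ff\,\mathbf 1_{\{y>m\}}$ then lets dominated convergence (or absolute continuity of the integral) send the right-hand side to $0$ as $m\to\infty$.

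Third, handle the left-hand side. Since $\psi_m(x)\uparrow x$ and $f(t),f^{\mathrm{in}}\ge0$, monotone convergence gives $\int\psi_m f(t)\to M_1(f(t))$ and $\int\psi_m f^{\mathrm{in}}\to\Theta$. Both limits are finite: $\Theta<\infty$ because $f^{\mathrm{in}}\in\Xi_g$ with $g(x)/x$ non-decreasing, so $x\le g(x)/g(1)$ for $x\ge1$, while $x\le 1$ near the origin and $f^{\mathrm{in}}\in\Xi_0$. The same argument applies to $f(t)\in\Xi_0\cap\Xi_g$. Hence the difference of the two limits is well defined, and combined with the second step it yields $M_1(f(t))=\Theta$ for every $t\in[0,T)$.

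The only delicate point is the bound $|\tilde\psi_m|\le 2y$ on $\{y>m\}$. This bound, rather than any positivity argument, is what makes the error controllable by the tail of the integrable quantity in \eqref{eq:MC-cond}. It relies on the no-mass-transfer condition \eqref{LMC}, and no sign information on $\tilde\psi_m$ is needed. Once it is established, the remainder is routine convergence.
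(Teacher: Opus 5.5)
Your proposal is correct and follows essentially the paper's argument: you test \eqref{eq:def-wf} with a bounded truncation of $x$, use \eqref{LMC} to see that $\tilde\psi$ vanishes for $y\le m$ and is bounded by a multiple of $y$ on $\{y>m\}$, and conclude by dominated convergence using \eqref{eq:MC-cond}. The differences are cosmetic. The paper uses $\psi_R=x\mathbf{1}_{(0,R)}$ and $0\le\tilde\psi_R\le y$ to first get $M_1(f(t))\le\Theta+I(t)$ by Fatou, whereas you use $\min(x,m)$ and justify finiteness through $g\in\mathcal{G}_b$. In fact only $\Theta<\infty$ is needed, since the identity then forces $\int\psi_m f(t)\,dx\to\Theta$; this also avoids relying on $f(t)\in\Xi_g$ for every $t$, which $L^\infty$-in-time membership only gives for almost every $t$.
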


\begin{proof}
For $R > 0$, set $\psi_R(x) := x\,\mathbf{1}_{(0,R)}(x)$,
which belongs to $L^\infty(0,\infty)$ and is thus an admissible test
function in Definition~\ref{DEF1}. We compute
\begin{align*}
\tilde\psi_R(y,z) = \int_0^y\psi_R(x)\,b(x,y,z)\,dx - \psi_R(y)
\end{align*}
in two cases.

If $y \le R$, then $\psi_R(y) = y$ and $\psi_R(x) = x$ for all
$x \in (0,y)$, so the local mass conservation condition \eqref{LMC} gives
\begin{align}
\tilde\psi_R(y,z)
= \int_0^y x\,b(x,y,z)\,dx - y = 0.
\label{eq:psi-small}
\end{align}
If $y > R$, then $\psi_R(y) = 0$ and
$\psi_R(x) = x\,\mathbf{1}_{(0,R)}(x)$, so
\begin{align}
\tilde\psi_R(y,z)
= \int_0^R x\,b(x,y,z)\,dx
= y - \int_R^y x\,b(x,y,z)\,dx,
\label{eq:psi-large}
\end{align}
where we used \eqref{LMC} in the last step. Since $x \ge 0$ and
$b \ge 0$, both representations in \eqref{eq:psi-large} yield
\begin{align}
0 \le \tilde\psi_R(y,z) \le y,
\qquad y > R.
\label{eq:psi-bnd}
\end{align}
Moreover, for each fixed $(y,z) \in (0,\infty)^2$, taking $R > y$ in
\eqref{eq:psi-small} gives $\tilde\psi_R(y,z) = 0$ for all $R > y$,
so that
\begin{align}
\lim_{R\to\infty}\tilde\psi_R(y,z) = 0.
\label{eq:psi-to-zero}
\end{align}
Combining \eqref{eq:psi-small}--\eqref{eq:psi-bnd},
\begin{align}
0 \le \tilde\psi_R(y,z) \le y,
\qquad (y,z) \in (0,\infty)^2,\quad R > 0.
\label{eq:psi-global-bnd}
\end{align}

Inserting $\psi_R$ into the weak formulation \eqref{eq:def-wf} and
recalling \eqref{eq:psi-small} (so that the integrand vanishes for
$y \le R$), we deduce
\begin{align}
\int_0^R x\,f(t,x)\,dx
- \int_0^R x\,f^{\mathrm{in}}(x)\,dx
= \frac{1}{2}
\int_0^t\!\int_R^\infty\!\int_0^\infty
\tilde\psi_R(y,z)\,a(y,z)\,f(s,y)\,f(s,z)
\,dy\,dz\,ds.
\label{eq:wf-psiR}
\end{align}
Since $\tilde\psi_R \ge 0$, applying the upper bound
\eqref{eq:psi-global-bnd} to the integrand gives
\begin{align}
\int_0^R x\,f(t,x)\,dx
\le \int_0^\infty x\,f^{\mathrm{in}}(x)\,dx
+ \frac{1}{2}
\int_0^t\!\int_0^\infty\!\int_0^\infty
y\,a(y,z)\,f(s,y)\,f(s,z)
\,dy\,dz\,ds
=: \Theta + I(t),
\label{eq:M1-bnd}
\end{align}
where $I(t) < \infty$ for all $t \in (0,T)$ by assumption
\eqref{eq:MC-cond}, and the right-hand side is independent of $R$.
Since $f \ge 0$, letting $R \to \infty$ in \eqref{eq:M1-bnd}
by Fatou's lemma gives
\begin{align}
M_1(f(t))
:= \int_0^\infty x\,f(t,x)\,dx
\le \Theta + I(t) < \infty,
\qquad t \in [0,T).
\label{eq:M1-finite}
\end{align}

Returning to \eqref{eq:wf-psiR} and letting $R \to \infty$, the
left-hand side satisfies
$\int_0^R x\,f(t,x)\,dx \to M_1(f(t))$ and
$\int_0^R x\,f^{\mathrm{in}}(x)\,dx \to \Theta$,
both by monotone convergence (using $f,f^{\mathrm{in}} \ge 0$) and the
finiteness established in \eqref{eq:M1-finite}. For the right-hand side
of \eqref{eq:wf-psiR}, note that $\tilde\psi_R(y,z) \to 0$ pointwise
as $R \to \infty$ by \eqref{eq:psi-to-zero}, and the integrand is
dominated by $y\,a(y,z)\,f(s,y)\,f(s,z)$, which belongs to
$L^1((0,t)\times(0,\infty)^2)$ by \eqref{eq:MC-cond}.
Lebesgue's dominated convergence theorem therefore gives
\begin{align}
\frac{1}{2}
\int_0^t\!\int_R^\infty\!\int_0^\infty
\tilde\psi_R(y,z)\,a(y,z)\,f f\,dy\,dz\,ds
\xrightarrow{R\to\infty} 0.
\label{eq:rhs-to-zero}
\end{align}
Passing to the limit $R \to \infty$ in \eqref{eq:wf-psiR} using
\eqref{eq:M1-finite} and \eqref{eq:rhs-to-zero} yields
$M_1(f(t)) = \Theta$ for all $t \in [0,T)$, which is \eqref{eq:MC}.
\end{proof}

%%%%%%%%%%%%%%%%%%%%%%%%%%%%%%%%%%%%%%%%%%%%%%%%%%%%%%%%%%%%%%%%%%%%%%%%%%%%%%%%%%%%%%%%%%%%%%%%%%%%%%%%%%%%%%%%%%%%%%%%%%%%%%%%%%%%%%%%%%%%%%%%%%%%%%%%%%%%%%%%%%%%%%%%%%%%%%%%%%%%%%%%%%%%%%%%%%%%%%%%%%%%%%%%%%%%%%%%%%%%%%%%%%%%%%%%%%%%%%%%%%%%%%%%%%%%%%%%%%%%%%%%%%%%%%%%%%%%

\begin{corollary}\label{cor:MC-auto}
Let $f$ be the weak solution constructed in Theorem~\ref{Thm:GE}.
Then condition \eqref{eq:MC-cond} is automatically satisfied, and
consequently $f$ is mass-conserving on $[0,T)$.
\end{corollary}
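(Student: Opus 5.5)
We need to show that
\[
\int_0^t\!\int\!\int y\,a(y,z)\,f(s,y)\,f(s,z)\,dy\,dz\,ds<\infty
\]
for the constructed $f$, and then invoke Proposition~\ref{Prop:MC}. The natural tool is the limit estimate \eqref{eq:HMBD2-lim}, which bounds $\int_0^t\iint g(y)\,a(y,z)\,f f$ by $C_0/\theta$. So the plan is to dominate the weight $y$ by a multiple of $g(y)$, at least for large $y$, and to treat small $y$ separately.

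\textbf{Step 1: split at $y=1$.} On $y\ge1$ we use that $g\in\mathcal{G}_b$, so $g(x)/x$ is non-decreasing. This gives $g(y)/y\ge g(1)$, i.e.\ $y\le g(y)/g(1)$. Then \eqref{eq:HMBD2-lim} yields
\[
\int_0^t\!\int_1^\infty\!\int_0^\infty y\,a\,f f\le \frac{C_0}{\theta g(1)} .
\]
Note that $g(1)>0$ by the definition of $\mathcal{G}_b$.

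\textbf{Step 2: the region $y<1$.} Here $y\le1$, so it suffices to bound $\int_0^t\int_0^1\int_0^\infty a(y,z)\,f f$. We decompose in $z$ following the kernel structure \eqref{eq:kernel-class}:
\begin{itemize}
\item For $z<1$, use \eqref{eq:w0-growth} and Cauchy--Schwarz as in \eqref{eq:ZM-CSI}. This bounds the integrand by $A\,\mathcal{J}(s)M_0(f(s))$.
\item For $z>1$, use Young's inequality to get $A\,\mathcal{J}(s)M_0(f(s))+A\bigl(\int_1^\infty\omega_\infty f\bigr)^2$. The second term is controlled by \eqref{eq:tail3-lim} with $m\downarrow1$, or by applying Fatou directly with $m=1$.
\end{itemize}
In Case 2 we have $\mathcal{J}\le\Theta$ by \eqref{eq:mass-ineq-limit}. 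In Case 1 we have $\mathcal{J}\le M_0(f(s))$. In both cases $M_0(f(s))\le C_1(T)$, since \eqref{eq:unif-L1} passes to the limit by weak lower semicontinuity on each $(0,a)$ followed by monotone convergence. Hence the $y<1$ piece is bounded by $A\,T\,C_1(T)(\Theta+C_1(T))+A C_0/(\theta g(1))$.

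A simpler alternative is available: with $\ell>0$, $\omega_0$ is bounded on $(0,1]$. So $a(y,z)\le A\,\omega(z)$ with $\omega(z)\le \max(\omega_0(1),\omega_\infty(z))$, and the same splitting applies. Either route gives finiteness for every $t<T$, with $T$ arbitrary in case (i).

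\textbf{Step 3: measurability and conclusion.} Combining Steps 1 and 2 gives integrability of $s\mapsto\iint y\,a\,ff$ on $(0,t)$, which is \eqref{eq:MC-cond}. Proposition~\ref{Prop:MC} then yields $M_1(f(t))=\Theta$.

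\textbf{Main obstacle.} The delicate point is the large-$y$ region. The argument relies on the monotonicity of $g(x)/x$ to compare $y$ with $g(y)$, and on \eqref{eq:HMBD2-lim}, which holds only because it was carried through Fatou's lemma in the limit passage. Everything else reuses the estimates from Lemma~\ref{Lem:ZM}, transferred to $f$.
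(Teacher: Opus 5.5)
Your proposal is correct and follows essentially the same route as the paper. The region $y\ge1$ is handled exactly as in the paper, via $g(y)\ge g(1)\,y$ and \eqref{eq:HMBD2-lim}; for $y<1$ the paper keeps the factor $y$ to get $\int_0^1 y\,\omega_0 f\le A_1\Theta$ and $\int_0^1\omega_0 f\le A_1 C_1(T)$, which is your ``simpler alternative'', and it treats the part $z>1$ with Young's inequality and \eqref{eq:tail3-lim} at $m=1$, as you do. Your main Step~2 route through $\mathcal{J}(s)M_0(f(s))$ is a harmless variant, and your remark about obtaining the $m=1$ tail bound via $m\downarrow1$ (or Fatou directly) is slightly more careful than the paper, which applies \eqref{eq:tail3-lim}, stated only for $m>1$, at $m=1$.
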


\begin{proof}
Using the kernel structure \eqref{eq:kernel-class}, we write
\begin{align*}
\int_0^\infty\!\int_0^\infty
y\,a(y,z)\,f(s,y)\,f(s,z)\,dy\,dz
=
A_0\bigl(I_1(s)+I_2(s)\bigr)\bigl(J_1(s)+J_2(s)\bigr),
\end{align*}
where
\begin{align*}
I_1(s)&:=\int_0^1 y\,\omega_0(y)\,f(s,y)\,dy,
&
I_2(s)&:=\int_1^\infty y\,\omega_\infty(y)\,f(s,y)\,dy,
\end{align*}
and
\begin{align*}
J_1(s)&:=\int_0^1 \omega_0(z)\,f(s,z)\,dz,
&
J_2(s)&:=\int_1^\infty \omega_\infty(z)\,f(s,z)\,dz.
\end{align*}

We first estimate the terms involving the region $(0,1)$.
By \eqref{eq:w0-growth} and the inequality
$y^{1+\ell}\le y$ for $y\in(0,1)$,
\begin{align*}
I_1(s)
&\le
A_1\int_0^1 y\,f(s,y)\,dy
\le
A_1\,M_1(f(s))
\le
A_1\,\Theta.
\end{align*}
Similarly, since $\omega_0(z)\le A_1 z^\ell\le A_1$
for $z\in(0,1)$,
\begin{align*}
J_1(s)
&\le
A_1\int_0^1 f(s,z)\,dz
\le
A_1\,M_0(f(s))
\le
A_1\,C_1(T).
\end{align*}
Hence,
\begin{align*}
A_0\int_0^t I_1(s)\,J_1(s)\,ds
\le
A\,\Theta C_1(T)\,T
<
\infty.
\end{align*}

Next, since $I_1(s)\le A_1\Theta$, Young's inequality gives
\begin{align*}
I_1(s)J_2(s)
\le
\frac{I_1(s)^2}{2}
+
\frac{J_2(s)^2}{2}.
\end{align*}
Using \eqref{eq:tail3-lim} with $m=1$, we obtain
\begin{align*}
\int_0^T J_2(s)^2\,ds
\le
\frac{C_0}{\theta\,g(1)},
\end{align*}
and therefore
\begin{align*}
A_0\int_0^t I_1(s)\,J_2(s)\,ds
\le
A\left(
\frac{\Theta^2\,T}{2}
+
\frac{ C_0}{2\theta g(1)}
\right)
<
\infty.
\end{align*}

To estimate the remaining terms, we use the fact that
$g\in\mathcal{G}_b$, so the function $g(y)/y$
is non-decreasing. Consequently,
\begin{align}
g(y)\ge g(1)\,y,
\qquad y\ge1,
\label{eq:g-lower}
\end{align}
which implies
\begin{align*}
y\,\omega_\infty(y)
\le
\frac{g(y)\,\omega_\infty(y)}{g(1)},
\qquad y\ge1.
\end{align*}

Restricting \eqref{eq:HMBD2-lim} to the region
$(1,\infty)\times(0,1)$ and using \eqref{eq:g-lower},
we get
\begin{align*}
A_0g(1)\int_0^t I_2(s)J_1(s)\,ds
&=
A_0g(1)\int_0^t\!\int_1^\infty\!\int_0^1
y\,\omega_\infty(y)\,\omega_0(z)\,f\,f\,dz\,dy\,ds
\\
&\le
A_0\int_0^t\!\int_1^\infty\!\int_0^1
g(y)\,\omega_\infty(y)\,\omega_0(z)\,f\,f\,dz\,dy\,ds
\\
&\le
\frac{C_0}{\theta},
\end{align*}
and hence
\begin{align*}
\int_0^t I_2(s)J_1(s)\,ds
\le
\frac{C_0}{A_0g(1)\theta}.
\end{align*}

Similarly, restricting \eqref{eq:HMBD2-lim} to $(1,\infty)^2$
yields
\begin{align*}
A_0g(1)\int_0^t I_2(s)J_2(s)\,ds
&\le
A_0\int_0^t\!\int_1^\infty\!\int_1^\infty
g(y)\,\omega_\infty(y)\,\omega_\infty(z)\,f\,f\,dz\,dy\,ds
\\
&\le
\frac{C_0}{\theta},
\end{align*}
so that
\begin{align*}
\int_0^t I_2(s)J_2(s)\,ds
\le
\frac{C_0}{A_0g(1)\theta}.
\end{align*}

Combining the above estimates, we conclude that
\begin{align*}
\int_0^t\!\int_0^\infty\!\int_0^\infty
y\,a(y,z)\,f(s,y)\,f(s,z)\,dy\,dz\,ds
<
\infty,
\qquad t\in(0,T).
\end{align*}
Therefore condition \eqref{eq:MC-cond} holds in $L^1(0,t)$,
and the mass-conservation property follows from
Proposition~\ref{Prop:MC}.
\end{proof}

\subsection*{Acknowledgements}
MA expresses deep gratitude to Jindal Global Business School,
O.P.\ Jindal Global University, for its invaluable support in providing
essential resources.

\end{document}